\theoremstyle{plain}
 \newtheorem{theorem}{Theorem}[section]
\newtheorem{lemma}[theorem]{Lemma}
\newtheorem{corollary}[theorem]{Corollary}
\newtheorem{proposition}[theorem]{Proposition}
\newtheorem{conj}[theorem]{Conjecture}
\theoremstyle{definition}
 \newtheorem{remark}[theorem]{Remark}
\newtheorem{definition}[theorem]{Definition}
\newcommand{\bbC}{\mathbb{C}}
\newcommand{\bbN}{\mathbb{N}}
\newcommand{\bbR}{\mathbb{R}}
\newcommand{\bbZ}{\mathbb{Z}}
\newcommand{\bbE}{\mathbb{E}}
\newcommand{\bbS}{\mathbb{S}}
\newcommand{\bbH}{\mathbb{H}}
\newcommand{\Vol}{\mathrm{Vol}}
\newcommand{\intU}{\accentset{\circ}{U}}
\newcommand{\bfi}{\mathbf{i}}
\newcommand{\bfr}{\mathbf{r}}
\newcommand{\Imm}{\mathrm{Im}\, }
\begin{document}

\title[The central set and the Kneser-Poulsen conjecture]{The central set and its application to the Kneser-Poulsen conjecture}

\author{Igors Gorbovickis}

\address{Jacobs University Bremen, Campus Ring 1, 28759 Bremen, Germany.}
\email{i.gorbovickis@jacobs-university.de}

\begin{abstract}
The Kneser-Poulsen conjecture says that if a finite collection of balls in a Euclidean (spherical or hyperbolic) space is rearranged so that the distance between each pair of centers does not increase, then the volume of the union of these balls does not increase as well. 
We give new results about central sets of subsets of a Riemannian manifold and apply these results to prove new special cases of the Kneser-Poulsen conjecture in the two-dimensional sphere and the hyperbolic plane.

51M16 (51M25, 51M10, 53C20, 53C22)
\end{abstract}

\maketitle

\section{Introduction}\label{sec:intro}

\subsection{Kneser-Poulsen conjecture}
The longstanding conjecture independently stated by Kneser~\cite{Kneser} in 1955 and Poulsen~\cite{Poulsen} in 1954 says that 
\begin{conj}\label{KP_conj}
If a finite set of (not necessarily congruent) balls in an $n$-dimensional Euclidean space is rearranged so that the distance between each pair of centers does not increase (we call this a contractive rearrangement), then the volume of the union of the balls does not increase as well. 
\end{conj}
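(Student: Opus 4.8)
The plan is to deduce the discrete statement of the conjecture from a continuous, variational one. First I would connect the initial configuration of centers $p=(p_1,\dots,p_N)$ to the rearranged configuration $q=(q_1,\dots,q_N)$ by a smooth one-parameter family $x(t)=(x_1(t),\dots,x_N(t))$, keeping the radii $r_1,\dots,r_N$ fixed, and study the function $t\mapsto \Vol\bigl(\bigcup_i B(x_i(t),r_i)\bigr)$. The heart of this approach is a Csik\'os-type formula for its derivative:
\[
\frac{d}{dt}\,\Vol\!\left(\bigcup_i B(x_i(t),r_i)\right)=\sum_{i<j} w_{ij}(t)\,\frac{d}{dt}\,|x_i(t)-x_j(t)|,
\]
where each weight $w_{ij}(t)\ge 0$ measures the part of the separating hypersurface between the $i$-th and $j$-th balls that actually lies on the boundary of the union. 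Consequently, along any motion for which every pairwise distance $|x_i(t)-x_j(t)|$ is non-increasing, the volume of the union is non-increasing as well, so it suffices to produce such a monotone motion joining $p$ to $q$.

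Second, the difficulty that a monotone motion need not exist inside $\bbR^n$ I would circumvent by the double-dimension device. In $\bbR^{2n}=\bbR^n\times\bbR^n$ set, for $t\in[0,\pi]$,
\[
x_i(t)=\left(\frac{p_i+q_i}{2}+\frac{p_i-q_i}{2}\cos t,\ \frac{p_i-q_i}{2}\sin t\right),
\]
so that $x_i(0)=(p_i,0)$ and $x_i(\pi)=(q_i,0)$. A direct computation gives
\[
|x_i(t)-x_j(t)|^2=\tfrac12\left(|p_i-p_j|^2+|q_i-q_j|^2\right)+\tfrac12\left(|p_i-p_j|^2-|q_i-q_j|^2\right)\cos t,
\]
and contractivity makes the coefficient of $\cos t$ non-negative, so every pairwise distance decreases monotonically as $t$ runs from $0$ to $\pi$. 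Applying the formula above in $\bbR^{2n}$ then yields, unconditionally, $\Vol_{2n}\bigl(\bigcup_i B(q_i,r_i)\bigr)\le \Vol_{2n}\bigl(\bigcup_i B(p_i,r_i)\bigr)$, where the balls are now taken in $\bbR^{2n}$.

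Finally, the step I expect to carry the full weight of the argument is descending from $\bbR^{2n}$ back to $\bbR^n$. Slicing the $2n$-dimensional union by the affine planes $\bbR^n\times\{y\}$ and using Fubini expresses
\[
\Vol_{2n}\!\left(\bigcup_i B(p_i,r_i)\right)=\int_{\bbR^n}\Vol_n\!\left(\bigcup_{i\,:\,|y|\le r_i} B\!\left(p_i,\sqrt{r_i^2-|y|^2}\right)\right)dy,
\]
and similarly for $q$. The monotonicity established in $\bbR^{2n}$ therefore yields only an \emph{averaged} inequality between the two configurations, not the pointwise $n$-dimensional statement the conjecture demands; extracting the latter is exactly where the difficulty concentrates, and is the reason the general conjecture remains out of reach. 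In low dimensions, and in the two-dimensional space forms, this gap can be closed by finer geometric arguments, and it is here that the central-set machinery developed in this paper should enter, replacing the crude slicing by a decomposition of the union adapted to its combinatorics. I thus expect the dimension-reduction step, rather than the variational formula or the double-dimension construction, to be the main obstacle.
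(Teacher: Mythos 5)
The statement you were asked about is the Kneser--Poulsen conjecture itself, which is open for $n\ge 3$; the paper contains no proof of it (it proves only the special case of Theorem~1.2 in $\bbS^2$ and $\bbH^2$), so no proposal could be graded as ``matching the paper's proof.'' What you have written is a faithful summary of the known Bezdek--Connelly program, and to your credit you explicitly flag that it does not close: the descent from $\bbR^{2n}$ to $\bbR^n$ is exactly where it breaks. Let me make the gap more precise than your Fubini formulation. Your slicing identity is correct, but it only shows that the conjecture \emph{implies} the averaged inequality (each slice $y$ is itself an instance of the conjecture with centers $p_i$, $q_i$ and radii $\sqrt{r_i^2-|y|^2}$), not the converse; integrated inequalities cannot be localized to the slice $y=0$. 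Bezdek and Connelly's actual descent is finer than Fubini: they prove that if the centers lie in $\bbR^n\subset\bbR^{n+2}$, then the derivative of the $(n+2)$-dimensional volume of the union with radii inflated to $\sqrt{r_i^2+s^2}$ equals, up to the factor $2\pi s$, the $n$-dimensional volume of a related union, which converts monotonicity of $(n+2)$-volumes along a continuous contraction in $\bbR^{n+2}$ into the desired $n$-dimensional inequality. Your cosine interpolation produces a monotone contraction only in $\bbR^{2n}$, so the two halves of the argument meet precisely when $2n\le n+2$, i.e.\ $n=2$: this is the Bezdek--Connelly planar theorem, and for $n\ge 3$ no replacement for either half is known. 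A secondary technical point: Csik\'os' formula requires some regularity of the motion (piecewise analyticity of the distance functions, say) and the weights $w_{ij}(t)$ are the $(n-1)$-volumes of the ``walls'' of the union, which is where the nonnegativity you use comes from; your cosine motion is analytic, so this is repairable, but it should be said.

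It is also worth noting that the paper's own contribution runs in a deliberately different direction from your sketch: as the introduction emphasizes, the proof of Theorem~1.2 is \emph{not} based on continuous contractions at all. Instead it reduces any counterexample to one whose centers lie in the central set $C_U$ (Lemma~2.2), establishes that $C_U$ is a finite cell complex homotopy equivalent to $U$ (Lemma~2.4), and then runs an induction on the tree structure of $C_U$ via the Splitting lemma ($U_X\cap U_Y=U_{X\cap Y}$) and a piecewise-isometric extension of the contraction. So even for the special cases the paper does settle, your proposed route and the paper's method share essentially nothing; the central-set machinery is offered precisely as an alternative to the dimension-doubling program whose obstruction you correctly identified.
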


The conjecture was proved by Bezdek and Connelly~\cite{BezdekConnelly} for $n=2$ and remains wide open for $n\ge 3$. Results confirming the conjecture and its generalizations in various special cases, were obtained by Bollob\'as~\cite{Bollobas}, Bern and Sahai~\cite{Bern_Sahai}, Csik\'os~\cite{Csikos95, Csikos98,Csikos01,Csikos_2006}, Gromov~\cite{Gromov}, Gordon and Meyer~\cite{Gordon_Meyer}, Bezdek and Connelly~\cite{BezdekConnelly, BezCon_Sphere} and the author~\cite{IG_6,IG_5}. 

The proofs of the above mentioned results are essentially based on the consideration of continuous contractions, where by a continuous contraction we mean a continuous motion of centers of the balls, such that pairwise distances between them decrease weakly monotonically. It follows from Csik\'os' formula~\cite{Csikos98} that the volume of the union of the balls also decreases weakly monotonically along such motions. The main difficulty of this approach is the fact that not every contractive rearrangement can be implemented as a continuous contraction within the same phase space. As a possible solution to this problem, Bezdek and Connelly in~\cite{BezdekConnelly} suggested to consider a continuous contraction of the centers in a larger ambient space and instead of the original Csik\'os' formula to apply its suitable modification. They obtained an analog of Csik\'os' formula, which was sufficient to prove the Kneser-Poulsen conjecture for $n=2$. Other suitable modifications of Csik\'os' formula were obtained in~\cite{BezCon_Sphere},~\cite{IG_5} and~\cite{IG_6}, which provided a proof of the Kneser-Poulsen conjecture for an arbitrary dimension $n$, but under quite strong additional assumptions on the ball configurations. 


Finally, we notice that the Kneser-Poulsen conjecture can also be formulated for configurations of balls in $n$-dimensional spherical and hyperbolic spaces instead of the Euclidean spaces. The spherical and hyperbolic versions of the conjecture are currently open even for $n=2$.




\subsection{Main results}

In this paper we introduce a new approach to the Kneser-Poulsen conjecture. 
Our approach is based on new results about central sets and about \textit{relative central sets} which we define in Section~\ref{rel_cut_loc_sec}. 

Applying our method, we prove the following new special case of the Kneser-Poulsen conjecture in the 2-dimensional sphere and the hyperbolic plane:

\begin{theorem}\label{main_theorem}
If the union of a finite set of (not necessarily congruent) closed disks in $\bbS^2$ or $\bbH^2$ has a simply connected interior, then the area of the union of these disks cannot increase after any contractive rearrangement. 
\end{theorem}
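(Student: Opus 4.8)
The plan is to avoid the usual dynamical argument entirely. In that argument one exhibits a continuous contraction of the centers and integrates Csik\'os' formula; the obstruction, as noted in the introduction, is that a continuous contraction need not exist in $\bbS^2$ or $\bbH^2$. Moreover the existence of such a path depends only on the two point configurations and not at all on the radii, so the simple-connectivity hypothesis cannot help on the dynamical side and must instead be exploited through the area functional itself. I would therefore set up a \emph{static} comparison. Writing the disks as $B_i=B(p_i,r_i)$, the basic tool is the weighted Dirichlet--Voronoi decomposition $U=\bigsqcup_i\bigl(B_i\cap V_i\bigr)$ with $V_i=\{x: d(x,p_i)-r_i\le d(x,p_j)-r_j\text{ for all }j\}$, which is a genuine partition because the minimizing index at a point of $U$ is always an index $i$ with $x\in B_i$; hence $\mathrm{Area}(U)=\sum_i\mathrm{Area}(B_i\cap V_i)$. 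The link to the central set is that the walls of this subdivision lying in the interior of $U$ consist of points having two nearest boundary features, so I expect the central-set results of the earlier sections to identify the central set $\Sigma$ of $U$ with the $1$-skeleton of this subdivision inside $U$.

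The decisive use of the hypothesis is that, $U$ being simply connected, these results force $\Sigma$ to be a tree. This acyclicity organizes the overlaps of the disks along a tree whose edges correspond to single pairs $(i,j)$ and single distances $d_{ij}$, with no cycle coupling different distances; informally, simple connectivity makes the higher-order inclusion--exclusion contributions cancel in a controlled pairwise fashion that a naive Bonferroni truncation cannot capture. I would then establish a monotonicity lemma that is the static counterpart of a single term of Csik\'os' formula: as the distance $d_{ij}$ across one edge decreases while the remainder of the tree is held fixed, the corresponding wall sweeps in a direction that cannot increase $\mathrm{Area}(U)$, the computation reducing to the elementary two-disk case. Summing these local inequalities over the tree, which is legitimate precisely because the tree has no cycles, should yield the global bound.

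To pass from $\{p_i\}$ to the contracted configuration $\{q_i\}$ I would transport the tree together with its edge-by-edge decomposition by means of the relative central set of Section~\ref{rel_cut_loc_sec}, obtaining a correspondence between the cells of the two subdivisions along which the pairwise monotonicity can be applied. The hard part will be the control of degenerations of the central set under contraction: as the $q_i$ move together, branch points of the tree can collide, edges can shrink to nothing, a disk can be engulfed by its neighbours, and the contracted union $U'=\bigcup_i B(q_i,r_i)$ need not itself be simply connected, so the naive tree correspondence breaks down. Proving that the pairwise monotonicity nevertheless survives every such transition, uniformly, is the crux of the argument, and it is exactly here that the new results on relative central sets are meant to furnish a canonical, distance-monotone matching of the two decompositions that remains stable as edges and branch points collapse.
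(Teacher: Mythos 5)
Your static set-up breaks at the very first structural claim: the walls of the weighted Dirichlet--Voronoi subdivision $V_i=\{x: d(x,p_i)-r_i\le d(x,p_j)-r_j\}$ are \emph{not} the central set of $U$. Take two overlapping congruent disks in the plane (or in $\bbS^2$, $\bbH^2$): the weighted wall is the perpendicular bisector of the segment $[p_1,p_2]$, whereas the central set is the orthogonal object, namely the segment $[p_1,p_2]$ itself --- the maximal balls touch the two corner points of $\partial U$ and their centers sweep out that segment (consistently with part (ii) of Lemma~\ref{cel_structure_lemma}, which recovers $U$ from balls centered at the $0$-cells of $C_U$, here the two original centers). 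In the paper the central set is identified with a subcomplex of the Voronoi diagram of a finite set $\Lambda\subset\partial U$ of \emph{boundary} points, not with any diagram built from the centers. Consequently the edges of your tree do not correspond to single pairs $(i,j)$ and single distances $d_{ij}$, and the whole ``edge $\leftrightarrow$ pairwise distance'' bookkeeping that your summation over the tree relies on has no foundation.

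The second, and decisive, gap is the one you concede yourself: the behavior of the comparison under an arbitrary (discontinuous) rearrangement. Your one-edge monotonicity lemma --- ``decrease $d_{ij}$ while the rest of the tree is held fixed'' --- quietly reintroduces a continuous motion, which is precisely the tool that is unavailable here; and even granting it, summing local inequalities over the tree is unjustified because perturbing one pair reshapes the central set globally, while the target configuration carries no tree structure at all (as you note, $U'$ need not be simply connected). The paper circumvents all of this without ever matching the two decompositions: by Kirszbraun--Valentine and Lemma~\ref{cut_locus_counterexample_lemma} a counterexample may be replaced by one consisting of \emph{all} maximal balls centered on $C_U$; by Theorem~\ref{Akopyan_theorem} the rearrangement extends to a piecewise isometry $f$ of $M$, so after subdividing, $f$ is an \emph{exact isometry} on each edge of the tree $C_U$; then a minimal-counterexample induction prunes one leaf edge $Y$ at a time, where the Splitting Lemma~\ref{crucial_lemma} (resting on contractibility of relative central sets, Lemma~\ref{relative_lemma}) gives exact inclusion--exclusion via Proposition~\ref{divide_and_conquer_proposition}, the leaf piece and the shared ball have volumes exactly preserved since $f$ is isometric on $Y$, and Lemma~\ref{sub_cutlocus_lemma} identifies $C_{U_X}=X$ so the edge count strictly drops. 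No pairwise monotonicity, no stability under collisions of branch points, and no control of the image's topology is needed: simple connectivity is used only on the source side, to make $C_U$ a tree and to keep $U_X$ simply connected. Your proposal, by contrast, locates the entire difficulty in exactly the degeneration analysis it leaves open.
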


We hope that further development of the approach suggested by this paper, can improve currently known results about the Kneser-Poulsen conjecture not just in 2-dimensional spaces, but also in spaces of arbitrary dimension. We describe our approach in Section~\ref{method_sec}. The proofs of all lemmas stated in Section~\ref{method_sec}, are given in Section~\ref{cut_locus_sec}.

It is worth mentioning that in contrast to the previously known results, the proof of Theorem~\ref{main_theorem} is not based on the consideration of a continuous contraction. Moreover, the proof of Theorem~\ref{main_theorem} for the cases of the sphere and the hyperbolic plane (as well as the Euclidean plane) is the same, which provides another evidence for the hypothesis that if the general Kneser-Poulsen conjecture holds in the Euclidean spaces, it should also hold in the spherical and hyperbolic spaces. 
At the same time, according to~\cite{Csikos_Kunszenti}, there are no other complete connected Riemannian manifolds in which the Kneser-Poulsen conjecture can possibly be true.

\subsection{Corollaries}



In case of the sphere $\bbS^2$, Theorem~\ref{main_theorem} can be equivalently reformulated in terms of the intersections of disks instead of the unions:

\begin{corollary}\label{intersect_corollary}
	If the intersection of a finite set of (not necessarily congruent) closed disks in $\bbS^2$ is connected then the area of the intersection of these disks cannot decrease after any contractive rearrangement.
\end{corollary}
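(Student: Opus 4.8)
The plan is to derive Corollary~\ref{intersect_corollary} from Theorem~\ref{main_theorem} by exploiting the antipodal map on the sphere. The key observation is that on $\bbS^2$ there is an exact duality between disks and their complements: if $D$ is a disk of radius $r$ centered at a point $p$, then the complement $\bbS^2 \setminus D$ is itself an open disk of radius $\pi - r$ centered at the antipodal point $-p$. Crucially, this complementation reverses set-theoretic operations via De Morgan's law and behaves predictably with respect to distances. First I would set up the correspondence: given disks $D_1, \dots, D_k$ with centers $p_1, \dots, p_k$ and radii $r_1, \dots, r_k$, I associate to them the complementary disks $D_i' = \bbS^2 \setminus D_i$, which are disks of radius $\pi - r_i$ centered at the antipodal points $p_i' = -p_i$.

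The second step is to translate the hypotheses and conclusions under this correspondence. By De Morgan, $\bbS^2 \setminus \bigcap_{i} D_i = \bigcup_i D_i'$, so the intersection $\bigcap_i D_i$ is connected exactly when its complement $\bigcup_i D_i'$ has a connected complement; for a region on $\bbS^2$ this is what I need to relate to the simply-connectedness hypothesis of Theorem~\ref{main_theorem}. Here I would use that a connected open subset of $\bbS^2$ is simply connected if and only if its complement is connected (equivalently, the union $\bigcup_i D_i'$ is simply connected precisely when $\bigcap_i D_i$ is connected). Thus the connectedness hypothesis on the intersection translates exactly into the simple-connectedness hypothesis of Theorem~\ref{main_theorem} applied to the complementary disks $D_i'$. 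For the areas, $\mathrm{Area}(\bigcap_i D_i) = 4\pi - \mathrm{Area}(\bigcup_i D_i')$, so decreasing the area of the intersection is equivalent to increasing the area of the union of complements, which is exactly the quantity controlled by Theorem~\ref{main_theorem}.

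The third and most delicate step is to check that contractiveness is preserved, but in the \emph{correct} direction. The antipodal map is an isometry of $\bbS^2$, and it sends a pair of points at distance $d$ to a pair at distance $d$ (antipodes of both), so the pairwise distances between the centers $p_i'$ equal the pairwise distances between the $p_i$. Hence a contractive rearrangement of the $D_i$ induces, after passing to antipodal centers, a rearrangement of the $D_i'$ with the \emph{same} monotonic behavior of pairwise center-distances, i.e.\ again a contractive rearrangement. Applying Theorem~\ref{main_theorem} to the $D_i'$ then gives that $\mathrm{Area}(\bigcup_i D_i')$ does not increase, which by the area identity means $\mathrm{Area}(\bigcap_i D_i)$ does not decrease, as claimed.

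The main obstacle I anticipate is the topological bookkeeping in the second step rather than any geometric difficulty: I must handle degenerate configurations carefully, for instance when the intersection is empty, when disks coincide, or when a disk degenerates (radius $0$ or a full hemisphere), and I must be precise about open versus closed disks so that the complementation sends disks exactly to disks and the equivalence ``intersection connected $\iff$ union of complements simply connected'' holds without boundary pathologies. I would dispose of the trivial cases (empty intersection, area zero) separately, and for the main case invoke Alexander duality or the standard fact about complements of connected open sets in $\bbS^2$ to justify the passage between connectedness and simple-connectedness cleanly.
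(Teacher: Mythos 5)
Your overall strategy --- pass to the complementary disks $D_i'=\bbS^2\setminus D_i$ (disks of radius $\pi-r_i$ centered at antipodes), use De Morgan and the area identity $\mathrm{Area}\bigl(\bigcap_i D_i\bigr)=4\pi-\mathrm{Area}\bigl(\bigcup_i D_i'\bigr)$, and note that the antipodal map is an isometry so contractiveness carries over --- is exactly the duality underlying the paper's one-line proof. But your second step contains a genuine error: the claimed equivalence ``$\bigcap_i D_i$ connected $\iff$ $\bigcup_i D_i'$ simply connected'' is false, because the union of the complements need not even be \emph{connected} when the intersection is. Concretely, take $D_1$ of radius $\pi-\varepsilon$ centered at the north pole and $D_2$ of radius $\pi-\varepsilon$ centered at the south pole: the intersection $D_1\cap D_2$ is a closed annular band around the equator, which is connected, while $D_1'\cup D_2'$ consists of two disjoint small caps at the poles. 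The correct statement, which is what the hypothesis actually buys you, is that each \emph{connected component} of the complement of a connected closed subset of $\bbS^2$ is simply connected. Since in your scheme the fundamental group of the intersection is not controlled (the annulus above is connected but not simply connected), Theorem~\ref{main_theorem} cannot be applied to the whole union $\bigcup_i D_i'$ in these configurations, and your proof breaks precisely there.

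The paper's proof repairs this in the natural way, and your argument needs the same two additions. First, decompose $\bigcup_i D_i'$ into its connected components; each component is the union of the sub-collection of disks $D_i'$ it contains, and each is simply connected by the fact above, so Theorem~\ref{main_theorem} applies to each component separately and shows that the area of each component's union does not increase under the (common) contractive rearrangement. Second, you need a subadditivity step that your write-up omits: after the rearrangement the components may merge or overlap, but the rearranged total union is contained in the union of the rearranged component-unions, so
$$
\mathrm{Area}\Bigl(\bigcup_i \tilde D_i'\Bigr)\le\sum_{\text{components }\Omega}\mathrm{Area}\bigl(\tilde\Omega\bigr)\le\sum_{\text{components }\Omega}\mathrm{Area}\bigl(\Omega\bigr)=\mathrm{Area}\Bigl(\bigcup_i D_i'\Bigr),
$$
which via the area identity yields the claim. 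With these two fixes your argument coincides with the paper's; as written, it is incomplete on the (typical) configurations where the complement of the intersection is disconnected.
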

\begin{proof}
	The proof follows from Theorem~\ref{main_theorem} applied to each connected component of the complement of the intersection.
\end{proof}

Since disks of radii not greater than $\pi/2$ are convex in the unit sphere $\bbS^2$, so is their intersection. Hence, their intersection is connected, and we obtain the following:

\begin{corollary}\label{large_disk_corollary}
(i) If a finite set of disks in $\bbS^2$ with radii not smaller than $\pi/2$ is rearranged so that the distance between each pair of centers does not increase, then the area of the union of the disks does not increase.

(ii) If a finite set of disks in $\bbS^2$ with radii not greater than $\pi/2$ is rearranged so that the distance between each pair of centers does not increase, then the area of the intersection of the disks does not decrease.
\end{corollary}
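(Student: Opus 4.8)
The plan is to derive both parts from the results already established, reducing part~(i) to part~(ii) by spherical (antipodal) duality, so that essentially no new geometric input is needed beyond Corollary~\ref{intersect_corollary} and the convexity remark preceding the statement.

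For part~(ii) I would argue directly. Each disk has radius at most $\pi/2$ and is therefore geodesically convex in $\bbS^2$, as noted above. The intersection of geodesically convex sets is again geodesically convex, hence connected. Thus the hypothesis of Corollary~\ref{intersect_corollary} is satisfied, and since the rearrangement is contractive, Corollary~\ref{intersect_corollary} yields that the area of the intersection does not decrease, which is exactly the assertion of part~(ii).

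For part~(i) the key observation is that on the unit sphere $d(x,p) + d(x,-p) = \pi$ for every $x$, where $-p$ denotes the antipode of $p$. Consequently the complement of the closed disk of radius $r$ about $p$ coincides, up to a boundary of zero area, with the disk of radius $\pi - r$ about $-p$. Writing $B(p_i,r_i)$ for the given disks with $r_i \ge \pi/2$, the complement of their union $\bigcup_i B(p_i,r_i)$ is the intersection $\bigcap_i B(-p_i,\, \pi - r_i)$ of the antipodal disks, each of radius $\pi - r_i \le \pi/2$. Moreover the antipodal map $x \mapsto -x$ is an isometry, so $d(-p_i,-p_j) = d(p_i,p_j)$; hence a contractive rearrangement of the centers $p_i$ induces a contractive rearrangement of the antipodes $-p_i$, the radii $\pi - r_i$ being unaffected by the motion.

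It then suffices to apply part~(ii) to the antipodal disks $B(-p_i,\, \pi - r_i)$: their intersection, which is the complement of $\bigcup_i B(p_i,r_i)$, does not decrease in area under the rearrangement. Since the total area of $\bbS^2$ is the constant $4\pi$, the area of $\bigcup_i B(p_i,r_i)$ equals $4\pi$ minus the area of its complement, so the area of the union does not increase, establishing part~(i). I expect the proof to be essentially a bookkeeping reduction rather than to contain a genuine obstacle; the only points that require care are the identification of the complement of a spherical disk with the antipodal disk of complementary radius and the verification that the connectedness hypothesis of Corollary~\ref{intersect_corollary} holds, both of which follow at once from convexity and the relation $d(x,p) + d(x,-p) = \pi$. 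Throughout, boundaries of disks carry zero area and therefore never affect these area comparisons.
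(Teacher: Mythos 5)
Your proposal is correct and matches the paper's intended derivation: part~(ii) is exactly the paper's argument (disks of radius at most $\pi/2$ are convex, so their intersection is connected and Corollary~\ref{intersect_corollary} applies), and part~(i) uses the same antipodal duality by which the paper equates statements about unions and intersections on $\bbS^2$, with boundaries being null sets so the area bookkeeping goes through. You merely make explicit (via $d(x,p)+d(x,-p)=\pi$ and the isometry $x\mapsto -x$) what the paper leaves implicit, so there is nothing to add beyond noting the trivial degenerate case where the antipodal intersection is empty, which holds vacuously.
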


Corollary~\ref{large_disk_corollary} improves the two-dimensional version of the theorem of Bezdek and Connelly~\cite{BezCon_Sphere}, in which they prove the same statement but for equal disks of radius exactly $\pi/2$.

Finally, we notice that our methods provide a new proof of the planar Kneser-Poulsen conjecture for intersections of disks. This result was originally proved 
in~\cite{BezdekConnelly}.

\begin{corollary}\cite{BezdekConnelly}
If a finite set of (not necessarily congruent) disks in the Euclidean plane is rearranged so that the distance between each pair of centers does not increase, then the area of the intersection of the disks does not decrease. 
\end{corollary}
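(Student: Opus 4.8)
The plan is to obtain the planar statement as the limiting case, when the curvature tends to zero, of the spherical statement in Corollary~\ref{intersect_corollary}. First I would record the one qualitative difference between the plane and $\bbS^2$: a finite intersection of Euclidean disks is convex, hence automatically connected, so no connectivity hypothesis is needed, and the inequality is nontrivial only when this intersection has positive area (otherwise it is immediate, since areas are nonnegative). Fix disks of radii $r_1,\dots,r_n$ with centers $p_1,\dots,p_n$, and a contractive rearrangement to centers $q_1,\dots,q_n$; writing $D_i(c)$ for the disk of radius $r_i$ centered at $c$, I abbreviate $\bigcap_i D_i(p)=\bigcap_i D_i(p_i)$ and similarly for $q$.

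For each large $R$ I would transplant the configuration onto the sphere $\bbS^2_R$ of radius $R$ by fixing a point $O\in\bbS^2_R$ and setting $p_i^R=\exp_O(p_i)$, $q_i^R=\exp_O(q_i)$, where $p_i,q_i$ are read as vectors in $T_O\bbS^2_R\cong\bbR^2$, again taking spherical disks of the same geodesic radii $r_i$. Since $\bbS^2_R\to\bbR^2$ in the pointed $C^\infty$ sense as $R\to\infty$, the geodesic distances $d_{\bbS^2_R}(p_i^R,p_j^R)$ converge to $|p_i-p_j|$, and the areas of the spherical disks and of their intersection converge to the corresponding Euclidean quantities, and likewise for $q$. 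In particular, for $R$ large the spherical intersection $\bigcap_i D_i(p^R)$ is connected, as it approximates the convex set $\bigcap_i D_i(p)$.

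The main obstacle is that this transplantation distorts distances, so a Euclidean contraction need not lift to a spherical contraction. I would remove this difficulty by first reducing to a strict contraction: replacing the target configuration $q$ by the configuration $q'$ obtained by scaling the centers towards a common point by a factor $1-\varepsilon$ makes every inequality $|q'_i-q'_j|<|p_i-p_j|$ strict on the pairs with $p_i\neq p_j$ (the coincident pairs stay coincident). For such a strictly contracted $q'$, the convergence of distances guarantees $d_{\bbS^2_R}(q_i'^{R},q_j'^{R})\le d_{\bbS^2_R}(p_i^R,p_j^R)$ for all pairs once $R$ is large, so $\{q_i'^{R}\}$ is a genuine spherical contraction of $\{p_i^R\}$. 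The final result for the original $q$ is then recovered by letting $\varepsilon\to0$ and using continuity of the intersection area in the centers.

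Finally, I would apply Corollary~\ref{intersect_corollary} on $\bbS^2_R$, which follows from its stated form on the unit sphere by rescaling the metric. Since the initial spherical intersection is connected, its area cannot decrease under the spherical contraction, giving $\mathrm{area}_{\bbS^2_R}\!\big(\bigcap_i D_i(q'^{R})\big)\ge \mathrm{area}_{\bbS^2_R}\!\big(\bigcap_i D_i(p^R)\big)$. Letting $R\to\infty$ and invoking the area convergence above yields the Euclidean inequality $\mathrm{area}\big(\bigcap_i D_i(q')\big)\ge\mathrm{area}\big(\bigcap_i D_i(p)\big)$, and then $\varepsilon\to0$ produces the desired conclusion for $q$. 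I expect the only genuinely delicate points to be the distance-distortion issue just addressed and the verification that the spherical intersection is connected for large $R$; both are controlled by the convexity of the Euclidean intersection together with the $C^\infty$ convergence $\bbS^2_R\to\bbR^2$.
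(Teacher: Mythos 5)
Your proposal is correct and follows essentially the same route as the paper: both deduce the planar statement from Corollary~\ref{intersect_corollary} by passing to a sphere of large radius and invoking continuity of distances and areas (the paper phrases this as a proof by contradiction in two sentences). Your strict-contraction perturbation $q\mapsto q'$ and the convexity of large-radius spherical disks are exactly the details hidden in the paper's ``by continuity,'' so your write-up is a fleshed-out version of the same argument rather than a different one.
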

\begin{proof}
Assume that there exists a counterexample to the statement of the corollary. Then by continuity there exists a counterexample to the similar statement for the disks on a sphere of a sufficiently large radius. Moreover, by continuity we can make sure that the intersection of the disks in the initial configuration on the sphere is connected. 
The latter contradicts to the statement of Corollary~\ref{intersect_corollary}.
\end{proof}

\subsection*{Acknowledgment.} The author would like to thank Robert Connelly for numerous stimulating discussions and Serge Tabachnikov for providing some useful references. The author is also grateful to an anonymous referee for his/her valuable comments.

\section{Description of the method}\label{method_sec}
In this section we describe the proposed general approach to the Kneser-Poulsen conjecture. Most of the lemmas, formulated here, will be proved later in Section~\ref{cut_locus_sec}. These lemmas will follow from more general results about central sets of compact subsets in a complete connected Riemannian manifold. In the end of this section we will give a proof of Theorem~\ref{main_theorem} modulo the formulated lemmas.

In the following discussion, let $M$ be one of the three spaces $\bbS^n$, $\bbE^n$ or $\bbH^n$, where $n\ge 2$, although some of the statements will also hold in a more general setting, where $M$ is an arbitrary complete connected Riemannian manifold.

Let $U\subsetneq M$ be a compact subset of $M$. We will say that a closed ball $B\subset U$ (possibly of zero radius) is \textit{maximal in} $U$, if it is not a proper subset of any other ball $B'\subset U$. The set $C_U\subset U$ that consists of the centers of all maximal balls, is called \textit{the central set} of $U$ (see Figure~\ref{pic} for an example of a central set). We note that typically, the set $C_U$ is infinite.


\begin{figure}[t]
\includegraphics[width=.43\textwidth, angle=-90]{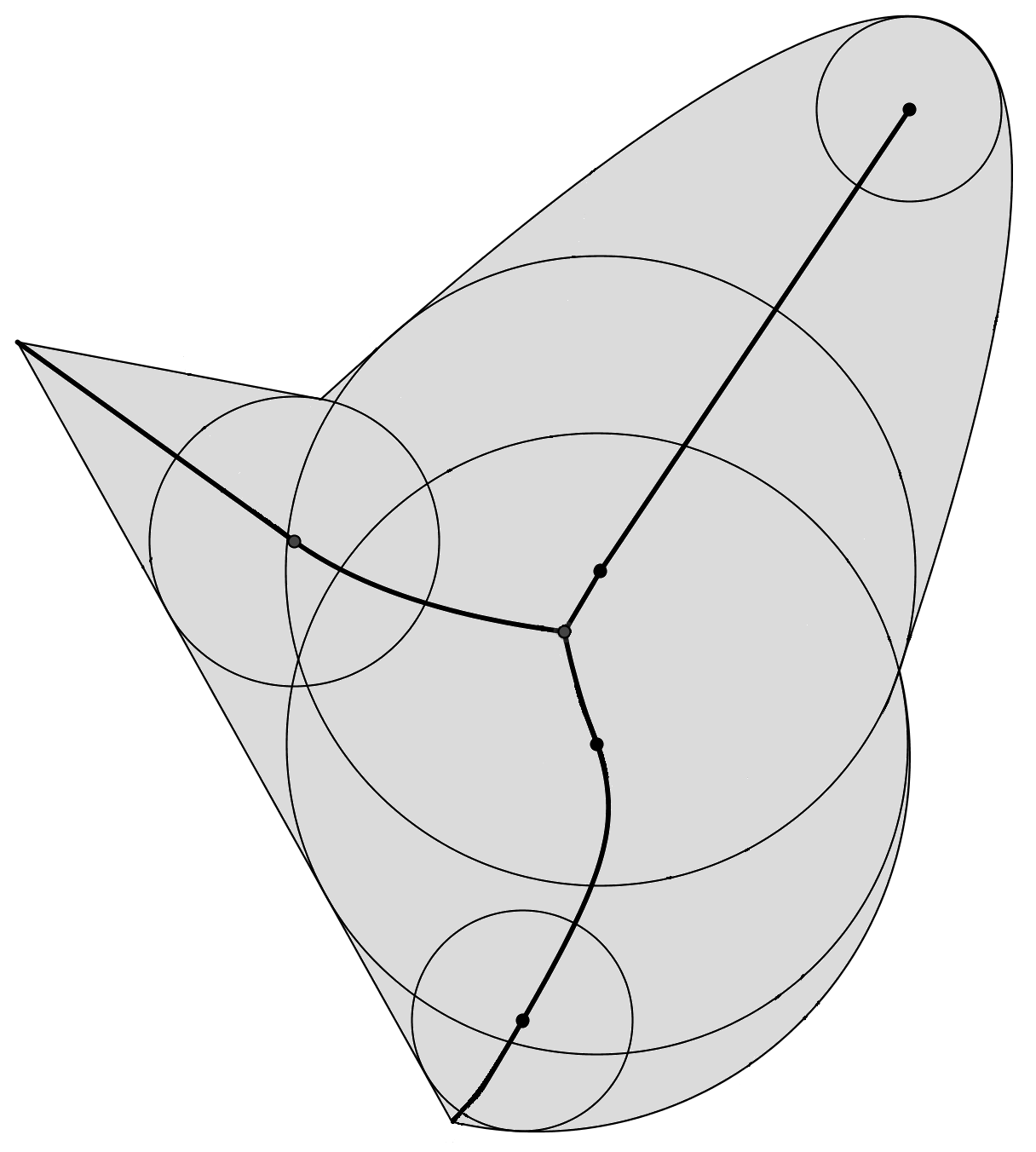}
\caption{The central set of the shaded region consists of three simple curves having a common endpoint}\label{pic}
\end{figure}

\subsection{Counterexamples with centers in the central set}
We start with an observation that can be loosely formulated in the following way: if there exists a counterexample to the Kneser-Poulsen conjecture, then this counterexample can be chosen so that the centers of the balls before the rearrangement lie in the central set of their union. 

For a more accurate statement we first introduce some additional notation: 
for a continuous map $f\colon C_U\to M$ that rearranges the centers of the maximal balls in $U$, we denote by $U_f\subset M$ the union of these maximal balls after their rearrangement by the map $f$. Now we can state the above observation in a more precise form:

\begin{lemma}\label{cut_locus_counterexample_lemma}
Let $U\subset M$ be the union of finitely many closed balls and let $V\subset M$ be the union of the same balls after a contractive rearrangement. Let $f\colon U\to M$ be an arbitrary contraction (1-Lipschitz map), such that its restriction to the centers of the balls provides the considered contractive rearrangement. Then the set $U_f$ is compact (hence has a well-defined volume) and $V\subseteq U_f$. In particular, if 
$\Vol[U_f]\le\Vol[U]$, then $\Vol[V]\le\Vol[U]$.
\end{lemma}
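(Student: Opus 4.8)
The plan is to reduce the whole statement to a single identity that rewrites $U_f$ as a union of balls indexed by all of $U$ rather than only by $C_U$, and then to read off both the compactness of $U_f$ and the inclusion $V \subseteq U_f$ from it. Throughout I write $\rho(x) = \sup\{r \ge 0 : \bar B(x,r) \subseteq U\}$ for the inradius at a point $x \in U$. Since $U$ is compact (hence closed) this supremum equals $d(x, M\setminus U)$ and is attained, so $\bar B(x,\rho(x))$ is the largest ball centered at $x$ inside $U$, and $\rho$ is $1$-Lipschitz as the distance function to $M\setminus U$. A point $c$ lies in $C_U$ precisely when $\bar B(c,\rho(c))$ is maximal, and by definition $U_f = \bigcup_{c \in C_U} \bar B(f(c), \rho(c))$.

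First I would establish the identity
\[
U_f = \bigcup_{x \in U} \bar B(f(x), \rho(x)).
\]
The inclusion $\subseteq$ is immediate from $C_U \subseteq U$. For $\supseteq$, fix $x \in U$. The ball $\bar B(x,\rho(x)) \subseteq U$ is contained in some maximal ball $\bar B(c,\rho(c))$, invoking the existence of a maximal ball containing a prescribed ball, which follows from the compactness of $U$ and is part of the general central-set theory of Section~\ref{cut_locus_sec}. By the basic description of ball containment in $M$ this gives $d(x,c) + \rho(x) \le \rho(c)$. Since $f$ is $1$-Lipschitz, $d(f(x),f(c)) \le d(x,c) \le \rho(c) - \rho(x)$, and hence by the triangle inequality $\bar B(f(x),\rho(x)) \subseteq \bar B(f(c),\rho(c)) \subseteq U_f$, proving the identity.

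For compactness I would introduce $\Phi \colon M \to \bbR$, $\Phi(y) = \min_{x \in U}\bigl(d(y,f(x)) - \rho(x)\bigr)$; the minimum is attained because $U$ is compact and the expression $d(y,f(x)) - \rho(x)$ is continuous in $x$, and $\Phi$ is $1$-Lipschitz as a minimum of the $1$-Lipschitz functions $y \mapsto d(y,f(x)) - \rho(x)$. The identity above shows $U_f = \{y \in M : \Phi(y) \le 0\}$, so $U_f$ is closed; it is also bounded, since every point of $U_f$ lies within distance $\max_{x \in U}\rho(x) < \infty$ of the compact set $f(U)$. By Hopf--Rinow a closed bounded subset of the complete manifold $M$ is compact, so $U_f$ is compact with well-defined volume. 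To see $V \subseteq U_f$, write $V = \bigcup_i \bar B(q_i, r_i)$ with $q_i = f(p_i)$, where the $p_i, r_i$ are the centers and radii of the defining balls of $U$; each $\bar B(p_i, r_i) \subseteq U$ lies in a maximal ball $\bar B(c,\rho(c))$, so $d(p_i,c) + r_i \le \rho(c)$, and applying the $1$-Lipschitz bound exactly as above yields $\bar B(q_i, r_i) \subseteq \bar B(f(c),\rho(c)) \subseteq U_f$. Taking the union gives $V \subseteq U_f$, whence $\Vol[V] \le \Vol[U_f]$ and $\Vol[U] < \Vol[V]$ forces $\Vol[U] < \Vol[U_f]$.

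The step I expect to be the main obstacle is the clean geometric input: the implication $\bar B(a,r) \subseteq \bar B(b,s) \Rightarrow d(a,b) + r \le s$ together with the existence of a maximal ball containing any prescribed ball in $U$. On $\bbS^n$ these require care, since large balls and antipodal phenomena can break the naive picture, and it is precisely here that the general results on central sets from Section~\ref{cut_locus_sec} are needed; by contrast, the remainder of the argument is soft, using only the triangle inequality and the $1$-Lipschitz property of $f$.
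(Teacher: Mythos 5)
Your proof is correct, and the two halves of it relate to the paper differently. The inclusion $V\subseteq U_f$ is obtained exactly as in the paper's Lemma~\ref{cut_locus_optimal_lemma}: each given ball sits inside a maximal ball by compactness of $U$, and the $1$-Lipschitz property of $f$ preserves the containment $B_p(r)\subseteq B_q(s)$ via $d(f(p),f(q))+r\le d(p,q)+r\le s$. For compactness, however, you take a genuinely different route. The paper's Lemma~\ref{compact_U_Xf_lemma} proves closedness of $U_{X,f}$ by a sequential limit-point argument, and when applied with $X=C_U$ it tacitly uses that $C_U$ is closed in $U$ (true here by the Structure lemma, but an extra input). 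Your argument sidesteps $C_U$ entirely: extending the radius function to the $1$-Lipschitz inradius $\rho$ on all of the compact set $U$, proving the identity $U_f=\bigcup_{x\in U}\bar B(f(x),\rho(x))$, and exhibiting $U_f$ as the sublevel set $\{\Phi\le 0\}$ of the $1$-Lipschitz function $\Phi(y)=\min_{x\in U}\bigl(d(y,f(x))-\rho(x)\bigr)$, then concluding by boundedness and Hopf--Rinow. This buys two things: no closedness assumption on the central set is needed, and boundedness of $U_f$ (needed for compactness but only implicit in the paper) is made explicit. One point deserves to be discharged rather than flagged: the implication $\bar B(a,r)\subseteq\bar B(b,s)\Rightarrow d(a,b)+r\le s$ is immediate in $\bbE^n$ and $\bbH^n$ by extending a minimizing geodesic from $b$ through $a$; on $\bbS^n$ it can only fail when $B_b(s)=\bbS^n$, since for a proper outer ball $d(a,b)+r>\pi$ would force the antipode of $b$ into $B_a(r)\subseteq B_b(s)$, a contradiction --- and when $B_b(s)=\bbS^n$ the desired containment $B_{f(a)}(r)\subseteq B_{f(b)}(s)=\bbS^n$ is trivial anyway. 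So your ``main obstacle'' is real but elementary, and the paper's own one-line version of this step in Lemma~\ref{cut_locus_optimal_lemma} glosses exactly the same point.
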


The proof of Lemma~\ref{cut_locus_counterexample_lemma} immediately follows from Lemma~\ref{cut_locus_optimal_lemma} and Lemma~\ref{compact_U_Xf_lemma} proved in Section~\ref{cut_locus_sec}.

We remark that the existence of a contraction $f\colon U\to M$ that satisfies the conditions of Lemma~\ref{cut_locus_counterexample_lemma}, is guaranteed by the Kirszbraun-Valentine extension theorem~\cite{Valentine}.

\subsection{Structure of the central set}
According to Lemma~\ref{cut_locus_counterexample_lemma}, in order to prove the Kneser-Poulsen conjecture, it is enough to rule out the counterexamples with (possibly infinitely many) balls, whose centers in the initial configuration lie in the central set of their union. In order to approach this problem, a better understanding of the structure of the central set is required.
The following lemma gives a combinatorial and topological description of the central set of the union of finitely many balls.

\begin{definition}
	We will say that a set $U\subsetneq M$ is an $n$-dimensional ball-polytope, if it can be represented as a union of finitely many closed balls of positive radii, and $\partial U$ is a 
	codimension one topological submanifold of $M$.
\end{definition}

\begin{lemma}[Structure lemma]\label{cel_structure_lemma}
Let $M$ be one of the three spaces $\bbS^n$, $\bbE^n$ or $\bbH^n$, where $n\ge 2$, and 
let $U\subsetneq M$ be a ball-polytope. Then the following holds:

(i) the central set $C_U$ has a structure of a finite $(n-1)$-dimensional cell complex whose $k$-dimensional cells are $k$-dimensional convex polytopes; 

(ii)  the set $U$ can be represented as the union of finitely many balls with centers at the $0$-dimensional cells of the cell complex $C_U$;

(iii) the sets $C_U$ and $U$ are homotopy equivalent.
\end{lemma}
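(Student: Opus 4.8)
The plan is to read off the cell structure of $C_U$ from the combinatorics of the contact set of each maximal ball, and then to analyze the geometry of each stratum separately. Write $U=\bigcup_{i=1}^{m}B_i$ with $B_i=B(c_i,r_i)$, $r_i>0$, so that $\partial U$ is assembled from finitely many convex spherical pieces lying on the spheres $\partial B_i$, meeting along the lower-dimensional \emph{reflex strata} $\partial B_{i_1}\cap\dots\cap\partial B_{i_k}$ (the inward creases and corners). For $x\in U$ set $\rho(x)=d(x,\partial U)$, the radius of the maximal ball centred at $x$; then $C_U$ is precisely the set where the nearest-point projection of $x$ to $\partial U$ is multivalued, equivalently where $\rho$ fails to be differentiable. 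Because $U$ is built from finitely many balls there are only finitely many reflex strata, and I would stratify $C_U$ according to which of them the maximal ball $B(x,\rho(x))$ touches; finiteness of the labels will give the finite decomposition required in part~(i).

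The first technical step, and the key that pins down the possible contacts, is a local \emph{containment} observation. If the maximal ball touches $\partial U$ at an interior point $p$ of a smooth piece of some $\partial B_i$, then $p$ is the radial foot of $x$ on $\partial B_i$, so $d(x,c_i)=r_i-\rho(x)$ and hence $B(x,\rho(x))\subseteq B_i$; maximality then forces $B(x,\rho(x))=B_i$ and $x=c_i$. Thus, apart from the finitely many points $c_i$, every maximal ball meets $\partial U$ only along the reflex strata, and the generic ($(n-1)$-dimensional) stratum of $C_U$ corresponds to a maximal ball touching exactly two reflex strata.

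To upgrade this to the statement of part~(i) I must show that each stratum is a convex polytope lying in a totally geodesic subspace of $M$. The tool is that every reflex stratum $\partial B_i\cap\partial B_j$ lies in the \emph{radical hypersurface} of $B_i$ and $B_j$, which is totally geodesic in each of $\bbS^n$, $\bbE^n$, $\bbH^n$; a cell with a fixed contact label is then cut out by equidistance equations between the touched strata together with the inequalities expressing that these strata realise the minimal distance, and I would argue that the equations define a totally geodesic subspace while the inequalities carve out a convex polytope inside it. I expect this to be the main obstacle: when the touched strata are positive-dimensional (which already occurs for $n\ge 3$) the contact points vary with $x$, so proving that the equidistance locus is genuinely totally geodesic, rather than merely a smooth hypersurface, and that the faces assemble into a regular cell complex, requires a careful analysis of the bisectors of the radical hypersurfaces. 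In the two-dimensional case needed for Theorem~\ref{main_theorem} this collapses to an elementary statement: the reflex strata are the isolated corner points $\partial B_i\cap\partial B_j$, and each $1$-cell is a subsegment of the perpendicular bisector of a pair of corners, which is automatically a geodesic segment.

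For part~(ii) I would start from the observation that every generating ball $B_i$ is contained in some maximal ball, so $U$ is already a finite union of maximal balls; it then remains to replace each maximal ball by maximal balls centred at the $0$-cells of its cell. This I would obtain from the monotone structure of the contact sets along the cells: moving the centre within a cell toward a face increases the set of touched strata, so one can push the centre of a maximal ball covering a given point of $U$ toward higher-contact faces until it lands at a $0$-cell, showing that the finitely many $0$-cell balls already cover $U$. Finally, for part~(iii) I would construct a deformation retraction of $U$ onto $C_U$ by flowing each point in the direction of $\nabla\rho$, i.e.\ away from its unique nearest boundary point: the flow is well defined and smooth off $C_U$, increases $\rho$, and terminates exactly when the nearest point ceases to be unique, which yields the homotopy equivalence. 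The delicate point here is the continuity of the flow as trajectories reach the non-smooth locus $C_U$, which I would control using the finite stratification established in part~(i).
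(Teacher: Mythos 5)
Your attempt stalls at exactly the step you yourself flag as ``the main obstacle,'' and the route you sketch for closing it would not work as stated. You propose to realize each positive-dimensional stratum of $C_U$ as part of the equidistance locus between the touched reflex strata, subject to tangency inequalities. But the set of points equidistant from two positive-dimensional sets is in general \emph{not} totally geodesic: already in $\bbE^2$ the bisector of a point and a line is a parabola, and the same curvature phenomenon occurs for pieces of subspheres. So for $n\ge 3$, where maximal balls can touch positive-dimensional reflex strata, your equidistance equations do not by themselves carve out convex polytopes, and the ``careful analysis of the bisectors of the radical hypersurfaces'' that you defer is precisely the missing content. The paper closes this gap with a different idea, which your stratification-by-contact framework lacks: Proposition~\ref{cell_contain_prop} shows that if the boundary sphere of a maximal ball passes through an \emph{interior} point of a positive-dimensional boundary cell $C\in\mathcal C_S$, then it must contain the \emph{whole} cell $C$ (the proof rests on the elementary fact that two spheres tangent at one point and meeting at a second point share the relevant subsphere). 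In particular, isolated single-point tangency to a reflex stratum --- the situation that produces curved bisectors --- simply does not occur for maximal balls. This converts all tangency constraints into the condition that the boundary sphere passes through a finite marked set $\Lambda\subset S$, chosen so that $\Lambda\cap C$ affinely spans each cell (Definition~\ref{Lambda_def}, Proposition~\ref{CLambda_contain_prop}); every center of a maximal ball is then equidistant from at least two points of $\Lambda$, and $C_U$ is exhibited as a subcomplex of the $(n-1)$-skeleton of the Voronoi decomposition of $\Lambda$, whose cells are automatically convex polytopes. Your ``first technical step'' proves the analogue of Proposition~\ref{cell_contain_prop} only for interior points of the top-dimensional smooth pieces, where it forces $B=B_i$; that is the easy case, and the lower-dimensional strata are where the difficulty lies.

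Two secondary points. Your part~(ii) rests on an unproved monotonicity claim (``moving the centre within a cell toward a face increases the set of touched strata''), whereas the paper gets (ii) directly: the $(n-1)$-dimensional cells of $\mathcal C_S$ cover $S$, and the corresponding generating balls are maximal balls whose centers are $0$-cells of $C_U$, so they already cover $U$. Your part~(iii) flow along $\nabla\rho$ is essentially the paper's map $F_U$ (flow along inward normal geodesics), and you correctly identify continuity at the non-smooth locus as the delicate point; but the paper does not reprove this --- it invokes the Milman--Waksman criterion (Lemma~\ref{retract_lemma}, Lemma~\ref{retract_lemma2}), by which the retraction is continuous if and only if $C_U$ is compact, with compactness supplied by part~(i). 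Continuity of such retractions is genuinely subtle (central sets of $C^\infty$ regions can even be nontriangulable), so ``control using the finite stratification'' is a plan rather than a proof as written, though it aims at the right mechanism once (i) is established.
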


We prove Lemma~\ref{cel_structure_lemma} in Section~\ref{cut_locus_sec}.

\begin{remark}
In case if $M=\bbE^n$, there is a strong relation between the central set $C_U$ of a ball-polytope $U$ and the dual complex $\mathscr K(U)$ (or dual shape $\mathscr S(U)$) defined in~\cite{Edelsbrunner}. More specifically, let $U$ be the union of finitely many $n$-dimensional balls $B_{x_1},\dots,B_{x_m}\subset \bbE^n$ with corresponding centers $x_1,\dots,x_m\in \bbE^n$ and let $V_{x_1},\dots,V_{x_m}$ be the associated weighted Voronoi cells. We recall that by definition, the convex hull of a subset $I\subset\{x_1,\dots,x_m\}$ is a cell of the dual complex $\mathscr K(U)$, if and only if the dimension of this convex hull is equal to $|I|-1$ and the intersection of truncated Voronoi cells $\bigcap_{x\in I}(V_x\cap B_x)$ is nonempty. The dual shape $\mathscr S(U)$ is defined as the union of all cells from $\mathscr K(U)$. It can be observed from the proof of Lemma~\ref{cel_structure_lemma} that for a ball-polytope $U\subset\bbE^n$, the equality $C_U=\mathscr S(U)$ holds, if and only if $\dim(\mathscr K(U))\le n-1$, while in the general case, the central set $C_U$ is a deformation retract of $\mathscr S(U)$.
\end{remark}

\subsection{Divide and conquer principle}
Let $X\subset C_U$ be a closed non-empty subset. We denote by $U_X$ the union of all maximal balls in $U$ that are centered at points of $X$. 

Now we are ready to formulate the lemma that is crucial for the proposed method. 

\begin{lemma}[Splitting lemma]\label{crucial_lemma}
Let $M$ and $U\subsetneq M$ be the same as in Lemma~\ref{cel_structure_lemma}. Let $X,Y\subset C_U$ be closed non-empty sets, such that $X\cup Y=C_U$. Then
\begin{equation*}
U_X\cap U_Y= U_{X\cap Y}.
\end{equation*}
\end{lemma}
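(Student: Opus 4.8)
The plan is to prove the two inclusions separately. For $c\in C_U$ write $r(c)$ for the radius of the maximal ball centred at $c$, so that $r(c)=d(c,\partial U)$ (the in-radius at $c$) and $U_X=\bigcup_{c\in X}B(c,r(c))$ for every closed $X\subseteq C_U$. The inclusion $U_{X\cap Y}\subseteq U_X\cap U_Y$ is immediate, since $A\subseteq B$ forces $U_A\subseteq U_B$ and $X\cap Y$ sits inside both $X$ and $Y$. So the content is the reverse inclusion $U_X\cap U_Y\subseteq U_{X\cap Y}$.

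Fix $p\in U_X\cap U_Y$. Because $X$ and $Y$ are compact and the balls are closed, there exist $a\in X$ and $b\in Y$ with $p\in B(a,r(a))$ and $p\in B(b,r(b))$. Introduce the set of centres of maximal balls through $p$, namely $C_U(p):=\{c\in C_U: d(c,p)\le r(c)\}$, so that $a,b\in C_U(p)$ and, by definition, any $c\in C_U(p)$ with $c\in X\cap Y$ yields $p\in U_{X\cap Y}$. The whole problem then reduces to showing that $C_U(p)$ is \emph{connected}: granting this, $C_U(p)\cap X$ and $C_U(p)\cap Y$ are two nonempty relatively closed subsets whose union is $C_U(p)$ (as $X\cup Y=C_U$), so connectedness forces them to meet, producing $c\in X\cap Y\cap C_U(p)$ and hence $p\in U_{X\cap Y}$.

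To prove connectedness of $C_U(p)$ I would pass through the larger auxiliary set $D_p:=\{x\in U: d(x,p)\le d(x,\partial U)\}$, the centres of arbitrary (not necessarily maximal) balls in $U$ that contain $p$; note $C_U(p)=D_p\cap C_U$. First, $D_p$ is star-shaped with respect to $p$: if $x\in D_p$ and $x'$ lies on a minimizing geodesic from $p$ to $x$, then $d(x',\partial U)\ge d(x,\partial U)-d(x',x)\ge d(x,p)-d(x',x)=d(x',p)$, the final equality holding because $x'$ is on the segment $[p,x]$; here I use only that $d(\cdot,\partial U)$ is $1$-Lipschitz together with the triangle equality along the geodesic, which also gives $B(x',d(x',p))\subseteq B(x,d(x,p))\subseteq U$. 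Hence $D_p$ is path-connected. Next I would retract $D_p$ onto $C_U(p)$ using the grassfire (gradient-ascent) flow of $x\mapsto d(x,\partial U)$, i.e.\ the deformation retraction $R\colon U\to C_U$ underlying the homotopy equivalence in Lemma~\ref{cel_structure_lemma}(iii): each point is pushed along the geodesic emanating from its nearest boundary point $q$ until it first meets $C_U$. Along this single geodesic $d(\cdot,\partial U)$ grows at unit speed, so $d(x,R(x))=r(R(x))-d(x,\partial U)$, whence for $x\in D_p$ one gets $d(R(x),p)\le d(R(x),x)+d(x,p)\le \bigl(r(R(x))-d(x,\partial U)\bigr)+d(x,\partial U)=r(R(x))$, and the same estimate holds at every intermediate point of the flow. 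Thus $R$ maps $D_p$ into $D_p$ and carries it into $C_U$, so $R(D_p)=D_p\cap C_U=C_U(p)$; being the continuous image of the connected set $D_p$, the set $C_U(p)$ is connected.

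The genuinely new steps are the two triangle-inequality computations above (star-shapedness of $D_p$ and the invariance $R(D_p)\subseteq D_p$), and the crossing argument is purely topological. The main obstacle is therefore not in these steps but in the properties of the retraction $R$: one must know that the grassfire flow on the ball-polytope $U$ is well defined and continuous, moves each point radially away from a nearest boundary point along one geodesic, and terminates on $C_U$. I expect to extract exactly these facts from the machinery built for Lemma~\ref{cel_structure_lemma}. The spherical case will need the usual care that all distances in play stay below the injectivity radius, after which the computation is word-for-word identical in $\bbS^n$, $\bbE^n$ and $\bbH^n$.
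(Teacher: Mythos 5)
Your proposal follows essentially the same route as the paper's: your $C_U(p)$ is precisely the paper's \emph{relative central set} $C_{U,p}$ of Section~\ref{rel_cut_loc_sec}, your $D_p$ is the paper's $D_p$, your two triangle-inequality computations reproduce the proof of Lemma~\ref{relative_lemma}, and your crossing argument is word-for-word Corollary~\ref{relative_corollary_1}. One small economy on your side: you establish only \emph{connectedness} of $C_{U,p}$, which is all the crossing argument needs, whereas the paper proves contractibility; as a result you never invoke the ``standard point'' hypothesis, which the paper uses only to guarantee $\xi_u\cap\xi_v=\{p\}$ in the star-shapedness step. (In the model spaces every point of a proper compact subset is standard, so nothing is gained in the end, but your variant is marginally more general, as the paper's cylinder example shows: there $C_{U,p}$ is connected though not contractible.) Also, your displayed equality $d(x,R(x))=r(R(x))-d(x,\partial U)$ need only be ``$\le$'', which the geodesic subsegment from $x$ to its cut point gives immediately.

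The one place your sketch is genuinely incomplete is exactly the step you flagged: the grassfire retraction is \emph{not} available on all of a ball-polytope $U$. What the paper's machinery supplies (Lemma~\ref{retract_lemma2}, obtained via the $C^1$-smooth shrinkings $U^\varepsilon=B_{C_U}(\bfr_U-\varepsilon)$, which have the same central set) is a continuous deformation retraction $\accentset{\circ}F_U$ defined only on the interior $\intU$; at corner points of $\partial U$ the flow direction is not even well defined, and Remark~\ref{cont_remark} warns that continuity is delicate in general (it holds on $\intU$ precisely because $C_U$ is compact, by part (i) of Lemma~\ref{cel_structure_lemma}). For $p\in\intU$ this is harmless: then $D_p\subset\intU$ (every $x\in D_p\setminus\{p\}$ has $d(x,\partial U)\ge d(x,p)>0$) and your argument closes. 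But a point $p\in U_X\cap U_Y$ may lie on $\partial U$ (tangency points of maximal balls do), and there $R$ is undefined at $p$ itself, while $D_p\setminus\{p\}$ is not obviously connected. The paper disposes of such points by a separate limiting argument (second paragraph of the proof of Lemma~\ref{rel_intersect_lemma}): since every maximal ball in $U$ has positive radius, such $p$ is a limit of points already known to lie in $U_{X\cap Y}$, and $U_{X\cap Y}$ is closed by Lemma~\ref{compact_U_Xf_lemma}. Adding this short patch (or the equivalent reduction to $U^\varepsilon$) makes your proof complete.
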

It is worth mentioning that the inclusion $U_{X\cap Y}\subset U_X\cap U_Y$ is trivial, while the opposite inclusion is not obvious.

The proof of Lemma~\ref{crucial_lemma} relies on the properties of a new object that we call a \textit{relative central set}. We introduce this notion in Section~\ref{rel_cut_loc_sec} for a more general class of compact sets in an arbitrary complete Riemannian manifold.

Lemma~\ref{crucial_lemma} is called the Splitting lemma, because it provides the groundwork for applying the ``divide and conquer'' principle and eventually constructing the inductive argument. In order to proceed with the ``divide and conquer'' principle, we have to introduce some additional notation and formulate a technical lemma.

For a closed set $X\subset U$ and a contraction $f\colon C_U\to M$ that rearranges the centers of the maximal balls in $U$, we let $U_{X,f}\subset U_f$ represent the union of the balls that are obtained from 
all maximal balls in $U$ with centers in $X$ after these balls are rearranged by the map~$f$. Lemma~\ref{compact_U_Xf_lemma} proved in Section~\ref{cut_locus_sec}, immediately implies the following statement:

\begin{lemma}\label{U_X_compact_lemma}
Let $M$ and $U\subsetneq M$ be the same as in Lemma~\ref{cel_structure_lemma}, and let $f\colon C_U\to M$ be a contraction. Then for any closed non-empty subset $X\subset C_U$, the sets $U_X$ and $U_{X,f}$ are compact, and in particular, have a well-defined volume.
\end{lemma}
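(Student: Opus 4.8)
The plan is to prove Lemma~\ref{U_X_compact_lemma} by reducing it entirely to the cited Lemma~\ref{compact_U_Xf_lemma}, whose precise statement I would consult in Section~\ref{cut_locus_sec}, since the excerpt tells us this reduction is immediate. The statement asserts compactness of two families of sets, $U_X$ and $U_{X,f}$, for an arbitrary closed non-empty $X\subset C_U$, under the running hypothesis that $M$ is one of $\bbS^n,\bbE^n,\bbH^n$ and $U\Subset M$ is a ball-polytope. The natural strategy is to realize both $U_X$ and $U_{X,f}$ as continuous images of compact sets, so that compactness follows from the fact that a continuous image of a compact set is compact in a Hausdorff space.

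First I would set up the parametrization. By the Structure lemma (Lemma~\ref{cel_structure_lemma}), each maximal ball in $U$ is determined by its center $c\in C_U$ and its radius $r(c)$, where $r$ is the radius of the largest ball contained in $U$ and centered at $c$; I would verify (or invoke from Section~\ref{cut_locus_sec}) that $c\mapsto r(c)$ is continuous on $C_U$, which is plausible since $r(c)=\mathrm{dist}(c,\partial U)$ and the distance-to-boundary function is $1$-Lipschitz, hence continuous. Then I would consider the evaluation map $\Phi\colon \{(c,v):c\in X,\ v\in T_cM,\ \lvert v\rvert\le r(c)\}\to M$ sending $(c,v)$ to $\exp_c(v)$, whose image is exactly $U_X$. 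The domain here is a closed subset of the unit (scaled) disk bundle over the compact set $X$, hence compact; since $\exp$ is continuous and $M$ is Hausdorff, $U_X=\Phi(\mathrm{dom}\,\Phi)$ is compact.

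The argument for $U_{X,f}$ runs along the same lines but uses the contraction $f$. Here each maximal ball centered at $c\in X$ is moved to the ball of the same radius $r(c)$ centered at $f(c)$, so $U_{X,f}$ is the image of the analogous disk bundle over $X$ under the map $(c,v)\mapsto \exp_{f(c)}(v)$, which is continuous because $f$ is continuous (being $1$-Lipschitz) and $\exp$ is jointly continuous in basepoint and tangent vector. Again the domain is compact and the image lands in $M$, so $U_{X,f}$ is compact. The key inputs I am leaning on are the continuity of $r(\cdot)$ on $C_U$ and the compactness of $X$ (closed subset of the compact central set $C_U$, which is compact by Lemma~\ref{cel_structure_lemma}(i) since $U\Subset M$).

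The main obstacle I anticipate is not topological but definitional: making sure the abstract parametrization above agrees with whatever $U_{X,f}$ literally means through the formalism of Lemma~\ref{compact_U_Xf_lemma}, and in particular checking that the radius function is genuinely continuous (it could fail to be upper or lower semicontinuous at points where the maximal ball degenerates, e.g. where several balls meet or where $r(c)=0$). Since the excerpt explicitly says the lemma follows \emph{immediately} from Lemma~\ref{compact_U_Xf_lemma}, I expect the cleanest route is to take the compactness of the relevant fibered construction directly from that lemma rather than reconstructing it; the only genuine content to supply is the trivial observation that $X$, being a closed subset of the compact set $C_U$, is itself compact, so that the hypotheses of Lemma~\ref{compact_U_Xf_lemma} apply verbatim to $X$ in place of the full central set.
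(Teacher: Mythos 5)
Your argument is correct in essence, but it takes a genuinely different route from the paper. The paper's Lemma~\ref{compact_U_Xf_lemma} proves only that $U_{X,f}$ is \emph{closed}, by contradiction: given $p\in\partial U_{X,f}\setminus U_{X,f}$, one extracts a limit point $x\in X$ of approximate witnesses $x_k$ and uses the $1$-Lipschitz property of $f$ (together, implicitly, with the $1$-Lipschitz property of $\bfr_U$) to force $d(f(x_k),p)-\bfr_U(x_k)\ge\varepsilon/2$ for infinitely many $k$, contradicting $d(f(x_k),p)-\bfr_U(x_k)<1/k$; compactness of $U_X$ and $U_{X,f}$ then follows because these sets are evidently bounded ($f(X)$ is compact and $\bfr_U$ is bounded) and closed bounded sets in $\bbS^n$, $\bbE^n$, $\bbH^n$ are compact, with $U_X$ handled as $U_{X,\mathrm{id}}$. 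You instead obtain compactness in one shot as the continuous image of a compact disk bundle, and your key worry about the radius function is resolved exactly as you suspect: for $c\in C_U$ the maximal ball centered at $c$ has radius $\bfr_U(c)=d(c,\partial U)$, so $\bfr_U$ is $1$-Lipschitz on $C_U$ (this also disposes of the degenerate case $\bfr_U(c)=0$), and $\exp_c$ of the closed tangent disk of radius $r$ equals $B_c(r)$ by completeness (Hopf--Rinow). Two small repairs are needed. First, your parametrization of $U_{X,f}$ is ill-typed: $v\in T_cM$ cannot be fed to $\exp_{f(c)}$. The fix is the pullback disk bundle $K=\{(c,w)\colon c\in X,\ w\in T_{f(c)}M,\ \|w\|\le\bfr_U(c)\}$, which is a closed subset of the product of $X$ with the compact disk bundle $\{w\in TM\colon \pi(w)\in f(X),\ \|w\|\le\max\bfr_U\}$ (closedness uses continuity of $f$, $\pi$ and $\bfr_U$), mapped by $(c,w)\mapsto\exp_{\pi(w)}(w)$. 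Second, your fallback of ``taking compactness directly from Lemma~\ref{compact_U_Xf_lemma}'' is not quite verbatim: that lemma yields closedness only, so the (trivial) boundedness observation must still be supplied. With these repairs your proof is complete; its merit is that it proves compactness directly and works in any complete Riemannian manifold once $X$ is compact and $\bfr_U$ is continuous, whereas the paper's sequential argument establishes closedness under the weaker hypotheses that $U$ is compact and $X\subset C_U$ is closed, with no structure theory of $C_U$ required.
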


We split the configuration of all maximal balls in $U$ into two smaller configurations, one with centers in $X$ and another -- with centers in $Y$, so that $X\cup Y=C_U$. Then Lemma~\ref{crucial_lemma} and Lemma~\ref{U_X_compact_lemma} imply that the volume of the set $U$ can be represented as
\begin{equation}\label{before contr_eq}
\Vol[U]=\Vol[U_X]+\Vol[U_Y]-\Vol[U_{X\cap Y}].
\end{equation}
An obvious inclusion
$U_{X\cap Y,f}\subset U_{X,f}\cap U_{Y,f}$ implies that
\begin{equation}\label{after_contr_eq}
\Vol[U_f]\le\Vol[U_{X,f}]+\Vol[U_{Y,f}]-\Vol[U_{X\cap Y,f}].
\end{equation}

In the following proposition we formulate a version of the ``divide and conquer'' principle, which is sufficient for a proof of Theorem~\ref{main_theorem}.
\begin{proposition}\label{divide_and_conquer_proposition}
Let $U\subsetneq M$ and $X,Y\subset C_U$ be the same as in Lemma~\ref{crucial_lemma}, and let $f\colon C_U\to M$ be a contraction. If $\Vol[U_{X,f}]\le\Vol[U_X]$, $\Vol[U_{Y,f}]\le\Vol[U_Y]$ and $\Vol[U_{X\cap Y}]=\Vol[U_{X\cap Y,f}]$, then
\begin{equation*}
\Vol[U]\ge\Vol[U_f].
\end{equation*}
\end{proposition}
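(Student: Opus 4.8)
The plan is to combine the two volume relations that the excerpt has already assembled, namely the exact decomposition~\eqref{before contr_eq} for $U$ and the subadditive estimate~\eqref{after_contr_eq} for $U_f$. The proof should be a short inclusion-exclusion argument: write out $\Vol[U]$ via~\eqref{before contr_eq}, write out the upper bound for $\Vol[U_f]$ via~\eqref{after_contr_eq}, and then compare the two right-hand sides term by term using the three hypotheses of the proposition.

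Concretely, I would first invoke Lemma~\ref{U_X_compact_lemma} to guarantee that all of $U_X$, $U_Y$, $U_{X\cap Y}$ and their images $U_{X,f}$, $U_{Y,f}$, $U_{X\cap Y,f}$ are compact and hence have well-defined volumes, so that every quantity appearing below is meaningful. The set $X\cap Y$ is closed, being the intersection of two closed sets, so Lemma~\ref{U_X_compact_lemma} applies to it as well. Then I would record the two starting inequalities
\begin{equation}\label{dc_proof_eq1}
\Vol[U]=\Vol[U_X]+\Vol[U_Y]-\Vol[U_{X\cap Y}]
\end{equation}
and
\begin{equation}\label{dc_proof_eq2}
\Vol[U_f]\le\Vol[U_{X,f}]+\Vol[U_{Y,f}]-\Vol[U_{X\cap Y,f}],
\end{equation}
which are exactly~\eqref{before contr_eq} and~\eqref{after_contr_eq}.

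The heart of the argument is then to estimate the right-hand side of~\eqref{dc_proof_eq2} from above by $\Vol[U]$. Using the three hypotheses $\Vol[U_{X,f}]\le\Vol[U_X]$, $\Vol[U_{Y,f}]\le\Vol[U_Y]$, and $\Vol[U_{X\cap Y,f}]=\Vol[U_{X\cap Y}]$, I would bound the two positive terms from above and replace the subtracted term by its equal. This yields
$$
\Vol[U_{X,f}]+\Vol[U_{Y,f}]-\Vol[U_{X\cap Y,f}]\le\Vol[U_X]+\Vol[U_Y]-\Vol[U_{X\cap Y}]=\Vol[U],
$$
where the final equality is~\eqref{dc_proof_eq1}. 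Chaining this with~\eqref{dc_proof_eq2} gives $\Vol[U_f]\le\Vol[U]$, which is the desired conclusion.

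There is no genuine obstacle here: the nontrivial content has been front-loaded into the Splitting Lemma (Lemma~\ref{crucial_lemma}), which supplies the exact identity~\eqref{before contr_eq}, and into Lemma~\ref{U_X_compact_lemma}, which makes the volumes well-defined. The only point requiring the slightest care is the direction of the subtracted terms: because $\Vol[U_{X\cap Y}]$ enters with a minus sign in the exact decomposition, it is essential that the hypothesis on the overlap be an \emph{equality} rather than merely an inequality in one direction, since an inequality of the wrong sign would propagate unfavorably. Granting the equality hypothesis, the inclusion-exclusion bookkeeping closes immediately and the proposition follows.
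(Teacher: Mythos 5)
Your proposal is correct and is exactly the paper's argument: the paper's proof consists of the single line that the proposition ``immediately follows'' from the identity~(\ref{before contr_eq}) and the inequality~(\ref{after_contr_eq}), which is precisely the inclusion-exclusion chaining you carry out in detail. Your added remarks --- invoking Lemma~\ref{U_X_compact_lemma} so all volumes are well-defined, and noting that the overlap hypothesis must be an equality because $\Vol[U_{X\cap Y}]$ enters with a minus sign --- are sound elaborations of the same proof rather than a different route.
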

\begin{proof}
The proof immediately follows from~(\ref{before contr_eq}) and~(\ref{after_contr_eq}). 
\end{proof}

We finish this subsection by formulating another technical lemma whose proof will be given in Section~\ref{cut_locus_sec}. 

\begin{lemma}\label{sub_cutlocus_lemma}
Let $M$ and $U\subsetneq M$ be the same as in Lemma~\ref{cel_structure_lemma}. Let $X\subset C_U$ be a closed non-empty set and let $Z\subset X$ be the relative boundary of $X$ in $C_U$. If $C_{U_{Z}}\subset X$, 
then $C_{U_X}=X$.
\end{lemma}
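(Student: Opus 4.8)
The plan is to prove the two inclusions $X\subseteq C_{U_X}$ and $C_{U_X}\subseteq X$ separately, reducing the harder second inclusion to the hypothesis $C_{U_Z}\subseteq X$ by means of the Splitting lemma. For the easy inclusion, fix $x\in X\subseteq C_U$ and let $B_x\subseteq U$ be the maximal ball of $U$ centered at $x$; this ball is unique, since two concentric balls are nested. By definition $B_x\subseteq U_X\subseteq U$, and if $B_x$ were a proper subset of some ball $B'\subseteq U_X$, then $B'\subseteq U$ would contradict the maximality of $B_x$ in $U$. Hence $B_x$ is maximal in $U_X$ and $x\in C_{U_X}$, so $X\subseteq C_{U_X}$.

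For the reduction, note first that if $X=C_U$ then $U_X=U$ and there is nothing to prove, so assume $C_U\setminus X\neq\varnothing$ and set $Y:=\overline{C_U\setminus X}$ (closure relative to $C_U$). Then $Y$ is closed and non-empty, $X\cup Y=C_U$, and $X\cap Y=Z$. The Splitting lemma (Lemma~\ref{crucial_lemma}) therefore yields $U_X\cap U_Y=U_Z$. I claim that for every $p\in C_{U_X}$, with maximal ball $B_p\subseteq U_X$, either $p\in X$ or $B_p\subseteq U_Y$. Granting this claim the lemma follows at once: if $p\in C_{U_X}\setminus X$, then $B_p\subseteq U_X\cap U_Y=U_Z$, and since $B_p$ is maximal in $U_X\supseteq U_Z$ it is \emph{a fortiori} maximal in $U_Z$, whence $p\in C_{U_Z}\subseteq X$, a contradiction. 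Thus $C_{U_X}\subseteq X$.

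To establish the claim, take $p\in C_{U_X}\setminus X$. If $p\in C_U$, then $p\in C_U\setminus X\subseteq Y$, so the maximal ball of $U$ at $p$ is one of the balls generating $U_Y$, and $B_p$, being a concentric subball of it, satisfies $B_p\subseteq U_Y$; the claim holds in this case. The remaining, substantive case is $p\in C_{U_X}\setminus C_U$, a \emph{new} center created by the excision: here the largest ball of $U$ centered at $p$ fails to be maximal in $U$, so the obstruction preventing $B_p$ from growing inside $U_X$ comes entirely from the cut $\partial U_X\setminus\partial U$. For each contact point $q$ of $B_p$ on this cut one has $q\in\mathrm{int}\,U$; since $q\in\partial U_X$, every neighbourhood of $q$ meets the complement of $U_X$ inside $U=U_X\cup U_Y$, hence meets $U_Y$, so $q\in\overline{U_Y}=U_Y$ and therefore $q\in U_X\cap U_Y=U_Z$.

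The \textbf{main obstacle} is to upgrade the statement ``all contact points of $B_p$ lie in $U_Z$'' to ``$B_p\subseteq U_Y$'' (equivalently $B_p\subseteq U_Z$). I expect to obtain this from the relative central set machinery developed in Section~\ref{rel_cut_loc_sec}: the cut part of $\partial U_X$ is assembled from the spheres $\partial B_x$ with $x$ near $Z$, and the relative central set should show that a ball of $U_X$ whose entire growth obstruction sits on the cut is already inscribed in $U_Z$. Once $B_p\subseteq U_Y$ is secured the claim, and with it the lemma, is complete; this localization of the cut boundary to balls centered near $Z$ is the step I anticipate will demand the most care.
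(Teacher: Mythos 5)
Your overall skeleton is the same as the paper's (the paper proves this via Lemma~\ref{sub_cutlocus_corollary}): the same easy inclusion $X\subseteq C_{U_X}$, the same choice $Y=\overline{C_U\setminus X}$ with $X\cap Y=Z$, the same appeal to the Splitting lemma to get $U_X\cap U_Y=U_Z$, and the same endgame ($B_p\subseteq U_Z$, hence $B_p$ maximal in $U_Z$, hence $p\in C_{U_Z}\subseteq X$, contradiction). However, the one step you leave open --- upgrading ``the contact points of $B_p$ on the cut lie in $U_Z$'' to ``$B_p\subseteq U_Y$'' in the case $p\in C_{U_X}\setminus C_U$ --- is precisely the substantive content of the lemma, so as written the proof has a genuine gap. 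The contact-point route is also shakier than your sketch suggests: your dichotomy says nothing if all contact points of $B_p$ happen to lie on $\partial U$ (you would separately have to show this forces $p\in C_U$), and knowing that a maximal ball touches $U_Z$ along its obstruction set does not by itself inscribe the ball in $U_Z$ or $U_Y$; no statement about relative central sets in Section~\ref{rel_cut_loc_sec} delivers that localization.

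The missing idea --- and the paper's actual move --- is to grow $B_p$ inside $U$ instead of analyzing its boundary. By compactness of $U$ (the same fact invoked in the proof of Lemma~\ref{cut_locus_optimal_lemma}), the ball $B_p\subseteq U_X\subseteq U$ is contained in some ball $B'$ that is maximal in $U$. If $B'$ were centered in $X$, then $B'\subseteq U_X$, and maximality of $B_p$ in $U_X$ would force $B_p=B'$; since $U\Subset M$ rules out balls with two centers, this would give $p\in X$, contrary to assumption. Hence $B'$ is centered in $C_U\setminus X\subseteq Y$, so $B_p\subseteq B'\subseteq U_Y$ --- exactly your claim, obtained uniformly with no case distinction on whether $p\in C_U$ and no local analysis of $\partial U_X$. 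Substituting this two-line argument for your anticipated ``localization'' step makes your proof complete and essentially identical to the paper's.
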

We note that in the above lemma the inclusion $X\subset C_{U_X}$ is obvious, while the opposite inclusion is not. Lemma~\ref{sub_cutlocus_lemma} will be used in the proof of Theorem~\ref{main_theorem} in the next subsection.

\subsection{Piecewise isometries}
In order to construct an inductive argument, it is convenient to work with contractions that can be described combinatorially. We choose piecewise isometries as a class of such contractions.

\begin{definition}
	A map $f\colon M\to M$ is called a \textit{piecewise isometry}, if the map $f$ is continuous and there exists a locally finite triangulation of $M$, such that for any simplex $T$ of that triangulation, the restriction of $f$ to $T$ is an isometry.
\end{definition}

An important ingredient of the proof of Theorem~\ref{main_theorem} is the following extension theorem:
\begin{theorem}\label{Akopyan_theorem}\cite{Akopyan, Brehm}
	Let $M$ be one of the three spaces $\bbE^n$, $\bbS^n$, $\bbH^n$, where $n\in\bbN$. Then any contraction $f\colon X\to M$ of a finite set $X\subset M$ can be extended to a piecewise isometry on $M$.
\end{theorem}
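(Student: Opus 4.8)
The plan is to realize the extension as a finite composition of \emph{folds}, the simplest non-expansive piecewise isometries, building up $f$ one target point at a time. By a \emph{hyperplane} $H\subset M$ I mean a complete totally geodesic hypersurface (an affine hyperplane in $\bbE^n$, a great subsphere in $\bbS^n$, a totally geodesic hyperplane in $\bbH^n$); it separates $M$ into closed half-spaces $H^+,H^-$ and carries a reflection $r_H$. The associated \emph{fold} $\varphi_H$ is the identity on $H^+$ and equals $r_H$ on $H^-$. Two basic properties must be checked. First, each fold is a piecewise isometry: triangulate $M$ so that $H$ is a subcomplex, and then $\varphi_H$ restricts to an isometry ($\mathrm{id}$ or $r_H$) on every simplex. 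Second, each fold is $1$-Lipschitz: for $p\in H^+$ and $q\in H^-$ the geodesic $[p,q]$ meets $H$ at a point $m$, and since $r_H$ fixes $m$ and preserves distances, $d(p,r_H(q))\le d(p,m)+d(m,r_H(q))=d(p,m)+d(m,q)=d(p,q)$, the remaining cases being isometric.

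Next I would record that $1$-Lipschitz piecewise isometries are closed under composition: given isometric triangulations for $F$ and $G$, pull the triangulation of $G$ back through $F$ (legitimate because an isometry carries simplices to simplices) and pass to a common refinement, checking local finiteness. Consequently any finite composition of folds is again a $1$-Lipschitz piecewise isometry, and it suffices to produce $f$ itself as such a composition, possibly preceded by one global isometry of $M$.

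The core construction is an inductive placement of the target points $y_i:=f(x_i)$. I begin with an ambient isometry $F_0$ satisfying $F_0(x_1)=y_1$, and maintain a composition $F_k$ of folds together with a set $S_k$ of \emph{placed} indices for which $F_k(x_i)=y_i$. To enlarge $S_k$, I pick an unplaced index $j$, set $c_j:=F_k(x_j)$, and fold across the perpendicular bisector $H$ of the segment $[c_j,y_j]$, oriented so that the half-space containing $y_j$ is fixed. This fold sends $c_j$ to $y_j$, and it leaves a placed point $y_i$ untouched \emph{precisely when} $y_i$ lies on the fixed side, that is, when $d(y_i,c_j)\ge d(y_i,y_j)$. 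The contraction hypothesis supplies the target inequalities $d(y_i,y_j)\le d(x_i,x_j)$, which is exactly the lower bound one wants to propagate forward to the current configuration.

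\textbf{The main obstacle} is precisely this distance control. A fold is $1$-Lipschitz, so it can only \emph{decrease} current pairwise distances, and an ill-chosen fold may push some $d(F_k(x_i),F_k(x_j))$ strictly below the target $d(y_i,y_j)$; once that happens, no single bisector-fold can place one endpoint without disturbing the other. The natural remedy is to carry the invariant $d(F_k(x_i),F_k(x_j))\ge d(y_i,y_j)$ for all $i,j$, which holds initially because $F_0$ is an isometry and the rearrangement is contractive, and which makes every placed point automatically fixable at each later stage. Since the set of placed indices only grows, termination in finitely many folds is then automatic; the delicate point is to show that from any non-final configuration satisfying the invariant one can choose an unplaced index and a bisector-fold that places it \emph{and} preserves the invariant on the remaining split pairs (those straddling $H$, whose distances decrease). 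I would secure this by an extremal choice of the index to be placed—say one whose target is nearest the already-placed block, or one maximizing a suitable potential—so that each split pair, being related by a perpendicular-bisector reflection, provably cannot fall below its target. Verifying that such an admissible fold always exists, uniformly across $\bbE^n$, $\bbS^n$ and $\bbH^n$, is the step I expect to demand the most care; only geodesic convexity, perpendicular bisectors, reflections, and the betweenness inequality above are invoked, and all are available in each space form, with the antipodal degeneracies of $\bbS^n$ handled separately.
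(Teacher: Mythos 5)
First, a point of reference: the paper does not prove this statement at all --- Theorem~\ref{Akopyan_theorem} is quoted from the cited works of Akopyan and Brehm --- so your attempt must be judged on its own merits rather than against an in-paper argument.

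Your setup is sound as far as it goes: folds across totally geodesic hypersurfaces are indeed $1$-Lipschitz piecewise isometries (your betweenness argument is correct), finite compositions of piecewise isometries are piecewise isometries via a common refinement of pulled-back triangulations (local finiteness is harmless for finitely many folds), and bisector folds are the right building blocks. The genuine gap is the step you yourself flag: the existence, at every stage, of an unplaced index and a single bisector fold that places it while preserving the invariant $d(F_k(x_i),F_k(x_j))\ge d(y_i,y_j)$ for all pairs. This is not a verification left for later care --- it is essentially the entire content of the theorem, and no proof, nor even a precise candidate lemma, is given. Observe that the invariant is not merely convenient but indispensable for your scheme: any $1$-Lipschitz map fixing the placed points $y_i$ and carrying the current position $c_j$ to $y_j$ forces $d(y_j,y_i)\le d(c_j,y_i)$, so the moment one placed--unplaced pair falls strictly below its target distance, the point $j$ can never again be placed by any contraction fixing the placed set, and the process is irrecoverably stuck. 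Yet a fold placing $j$ strictly decreases the distance of \emph{every} pair split by the bisector, and nothing in the proposed extremal choices (``nearest to the placed block,'' ``a suitable potential'') is shown --- or even precisely formulated --- to keep split pairs above their targets.

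Moreover, there is a structural reason to doubt that one fold per point, in any order, can suffice: the condition needed to place $j$ in a single fold is $d(y_i,c_j)\ge d(y_i,y_j)$ for \emph{all} placed $i$ simultaneously, whereas the contraction hypothesis only controls distances in the original and target configurations, not in the mixed intermediate ones. This is exactly the obstruction around which the cited proofs are organized; they do not place each point by one greedy fold, but transport a point to its target through a sequence of intermediate folds, with a strengthened inductive hypothesis guaranteeing that each intermediate bisector fixes everything already placed. So to complete your argument you would either have to prove your admissible-fold claim --- which I expect to be at least as hard as the theorem itself, if it is true at all in the single-fold form --- or restructure the induction along the multi-fold lines of Brehm and Akopyan. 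As it stands, the proposal is a plausible strategy outline with its central lemma missing.
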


\begin{proof}[Proof of Theorem~\ref{main_theorem}]
Let $M$ be one of the three spaces $\bbE^2$, $\bbS^2$ or $\bbH^2$.
Assume that there exists a counterexample to the statement of Theorem~\ref{main_theorem}. According to Theorem~\ref{Akopyan_theorem}, the corresponding rearrangement of the centers of the disks can be extended to a piecewise isometry $f\colon M\to M$. We fix this piecewise isometry $f$ together with the associated triangulation of the space $M$. Without loss of generality we may assume that every interior point of the union of the disks in the initial configuration is also an interior point of some disk. (If an interior point is not in the interior of some disk, then it lies in the intersection of at least two boundary circles. There are finitely many such points and we can always place new small disks that cover them.)
We decrease the radii of the disks slightly, if necessary, so that the union of the disks in the initial configuration of the counterexample is a simply connected ball-polytope. 
Let us denote this ball-polytope by $U\subset M$. Since the perturbation of the radii can be arbitrarily small, we may assume that the new configuration of disks still provides a counterexample to the statement of Theorem~\ref{main_theorem}. Then Lemma~\ref{cut_locus_counterexample_lemma} implies that
\begin{equation}\label{mathcal_U_eq}
\Vol[U_f]>\Vol[U].
\end{equation}

Let $\mathcal U$ be the set of all simply connected ball-polytopes $U\subset M$, such that the inequality~(\ref{mathcal_U_eq}) holds.
As we just noticed, the set $\mathcal U$ is non-empty. It follows from Lemma~\ref{cel_structure_lemma} that for every $U\in\mathcal U$, the central set $C_U$ has the structure of a tree with straight edges. Every edge intersects only finitely many simplexes from the triangulation associated to $f$, so it can be split into finitely many edges, such that $f$ restricted to each one of them is an isometry.

For $U\in\mathcal U$, let $\Gamma(U)$ denote the tree structure on $C_U$, such that $f$ restricted to every edge of $\Gamma(U)$ is an isometry, and $\Gamma(U)$ has the smallest possible number of edges. Let $|\Gamma(U)|\ge 0$ denote the number of edges in the graph $\Gamma(U)$.

We choose an element $U\in\mathcal U$ with the minimal number $|\Gamma(U)|$. This number is strictly positive, since otherwise $U$ is a ball and the inequality~(\ref{mathcal_U_eq}) does not hold. Now since $|\Gamma(U)|>0$ and $\Gamma(U)$ is a tree, it has an edge with a vertex of index $1$. Let us denote this edge by $Y\subset C_U$ and define $X=\overline{C_U\setminus Y}$. Since $X\cap Y$ is a singleton, the sets $U_{X\cap Y}$ and $U_{X\cap Y,f}$ are congruent balls, hence their volumes are equal, and since $f$ is an isometry on $Y$, it follows from Proposition~\ref{divide_and_conquer_proposition} that
\begin{equation*}
\Vol[U_{X,f}]>\Vol[U_X].
\end{equation*}
Moreover, it follows from part (ii) of Lemma~\ref{cel_structure_lemma} and from Lemma~\ref{crucial_lemma} that the set $U\setminus U_X$ has a form $B_1\setminus B_2$, where $B_1,B_2\subset M$ are two closed disks centered at the vertexes of the edge $Y$. The latter implies that $U_X$ is a simply connected ball-polytope, hence $U_X\in\mathcal U$.

Finally, since relative boundary of $X$ in $C_U$ is the set $X\cap Y$ that consists of one point, we have $C_{U_{X\cap Y}}=X\cap Y\subset X$, and Lemma~\ref{sub_cutlocus_lemma} implies that $C_{U_X}=X$. Thus $|\Gamma(U_X)|<|\Gamma(U)|$, which contradicts the choice of the ball-polytope $U$. 
\end{proof}

\subsection{The general case of the Kneser-Poulsen conjecture}

As seen from the proof of Theorem~\ref{main_theorem}, in order to apply the same methods to prove the Kneser-Poulsen conjecture in full generality in spaces of arbitrary dimension, one requires a more delicate statement than Proposition~\ref{divide_and_conquer_proposition}, since we cannot always find a splitting of the central set $C_U$ into two pieces $X$ and $Y$ so that $\Vol[U_{X\cap Y}]=\Vol[U_{X\cap Y,f}]$ and neither of the sets $U_X$ and $U_Y$ is equal to $U$. This is the main obstruction. 
However, if we can show that there always exists a splitting $C_U=X\cup Y$, such that
\begin{equation*}
\Vol[U_{X}]+\Vol[U_{Y}]-\Vol[U_{X,f}]-\Vol[U_{Y,f}]\ge \Vol[U_{X\cap Y}]-\Vol[U_{X\cap Y,f}],
\end{equation*}
then~(\ref{before contr_eq}) and~(\ref{after_contr_eq}) will imply that 
\begin{equation*}
\Vol[U]\ge\Vol[U_f].
\end{equation*}

\section{Notation}

In the remaining part of the paper we will always assume that $M$ is a (smooth) complete connected Riemannian manifold, unless otherwise specified.
For a point $p\in M$ and a positive real number $r>0$, let $B_p(r)\subset M$ represent a closed geodesic ball of radius $r$, centered at $p$. By convention, $B_p(0)=\{p\}$.

Let $X$ be a set and consider two functions, $\bfi\colon X\to M$ and $\bfr\colon X\to\bbR^+$. We denote by $B_{X}(\bfi, \bfr)\subset M$
the union of geodesic balls
\begin{equation*}
B_{X}(\bfi,\bfr)=\bigcup_{x\in X} B_{\bfi(x)}(\bfr(x)),
\end{equation*}
where $x$ runs over all elements of $X$. If $X$ is already a subset of $M$ and the map $\bfi$ is the identity map on $X$, in order to shorten the notation, we will write $B_X(\bfr)$ instead of $B_{X}(\bfi,\bfr)$. 

Let $d(\cdot,\cdot)$ be the intrinsic metric on $M$ induced by the Riemannian metric tensor. Consider a set $X\subset M$. A map $f\colon X\to M$ is called a \textit{contraction}, if for every pair of points $x,y\in X$, the inequality $d(x,y)\ge d(f(x),f(y))$ holds.

We also let $\Vol[X]$ represent the volume of a set $X\subset M$ provided that it is defined.

\section{Central sets}\label{cut_locus_sec}

The goal of this section is to establish some important properties of central sets and prove the lemmas formulated in Section~\ref{method_sec}. In particular, we introduce the notion of a \textit{relative central set} and prove that under certain conditions, satisfied by ball-polytopes in $\bbE^n$, $\bbS^n$ or $\bbH^n$, the relative central sets are contractible (c.f. Lemma~\ref{relative_lemma} for a precise statement). For the sake of possible further generalizations, most of the results about the central sets are proved for subsets of an arbitrary complete connected Riemannian manifold. 

We start by giving the definition of the central set of a closed region in a Riemannian manifold $M$.

\begin{definition}\label{max_ball_def}
Let $U\subsetneq M$ be a closed subset of $M$. A closed geodesic ball $B=B_x(r)\subset U$ is called a \textit{maximal ball} in $U$, if it is not a proper subset of any other closed geodesic ball $B'=B_y(R)\subset U$ contained in $U$ and such that $d(x,y)+r\le R$.
\end{definition}

\begin{definition}
Let $U\subset M$ be a closed subset of $M$. A point $p\in U$ is a \textit{cut point} of $U$, if it is a center of a maximal ball in $U$.
\end{definition}

\begin{definition}
The \textit{central set} of a closed set $U$ in $M$ is the set of all cut points of $U$. We denote the central set of $U$ by $C_U$. By $\bfr_U$ we denote the map $\bfr_U\colon C_U\to\bbR^+$, such that for every $x\in C_U$, the ball $B_x(\bfr_U(x))$ is a maximal ball in $U$.
\end{definition}

The following simple lemma is a generalized version of Lemma~\ref{cut_locus_counterexample_lemma} formulated in Section~\ref{method_sec}.

\begin{lemma}\label{cut_locus_optimal_lemma}
Assume that a set $X\subset M$ and a function $\bfr\colon X\to\bbR^+$ are such that the set $U=B_X(\bfr)$ is compact in $M$. If a map $f\colon U\to M$ is a contraction, then
$B_{X}(f,\bfr) \subset B_{C_U}(f,\bfr_U)$.
\end{lemma}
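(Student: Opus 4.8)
The plan is to prove the inclusion pointwise: I take an arbitrary point $q\in B_X(f,\bfr)$ and produce a single maximal ball of $U$ whose image under $f$ already contains $q$. By the definition of $B_X(f,\bfr)$ there is an index $x\in X$ with $d(q,f(x))\le\bfr(x)$, and the corresponding ball $B_x(\bfr(x))$ is one of the balls whose union defines $U=B_X(\bfr)$, so $B_x(\bfr(x))\subseteq U$. It therefore suffices to find $c\in C_U$ with $d(q,f(c))\le\bfr_U(c)$.

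The first step is to enlarge $B_x(\bfr(x))$ to a maximal ball of $U$. Here I would use that $U$ is compact: on the poset of closed balls $B'\subseteq U$ containing $B_x(\bfr(x))$, ordered by inclusion, every increasing chain has an upper bound, so a maximal element exists by Zorn's lemma. For the upper bound of a chain $B_{x_\alpha}(r_\alpha)$, compactness bounds the radii by $\mathrm{diam}\,U$ and confines the centers to the compact set $U$, so after passing to a convergent subnet $x_\alpha\to x^*$, $r_\alpha\to r^*$ the ball $B_{x^*}(r^*)$ contains every member of the chain (by continuity of $d(\cdot,\cdot)$) and remains inside $U$ (using that $U$ is closed and that interior points of $B_{x^*}(r^*)$ eventually lie in the chain balls). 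The resulting maximal ball is $B_c(\bfr_U(c))$ with $c\in C_U$ and $B_x(\bfr(x))\subseteq B_c(\bfr_U(c))$.

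The second step transfers this containment through the contraction. From $B_x(\bfr(x))\subseteq B_c(\bfr_U(c))$ I would extract the metric inequality $d(x,c)+\bfr(x)\le\bfr_U(c)$; combining it with the contraction property $d(f(x),f(c))\le d(x,c)$ and the triangle inequality gives
\[
d(q,f(c))\le d(q,f(x))+d(f(x),f(c))\le\bfr(x)+d(x,c)\le\bfr_U(c),
\]
so $q\in B_{f(c)}(\bfr_U(c))\subseteq B_{C_U}(f,\bfr_U)$. As $q$ was arbitrary, this establishes the inclusion.

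I expect the main obstacle to be exactly the metric inequality $d(x,c)+\bfr(x)\le\bfr_U(c)$ used in the second step. Set-theoretic containment $B_x(\bfr(x))\subseteq B_c(\bfr_U(c))$ by itself does \emph{not} force this inequality in an arbitrary complete Riemannian manifold: when the larger ball "wraps around" the space, a point at distance $\bfr(x)$ from $x$ along the geodesic through $c$ can be closer to $c$ by a competing route, so the triangle inequality need not be tight and the extended geodesic need not be minimizing. This is precisely the place where the geometry of the ambient space enters, and in the model spaces $\bbE^n$ and $\bbH^n$ (and in $\bbS^n$ under the size restrictions relevant to disks here) containment of balls is equivalent to $d(x,c)+\bfr(x)\le\bfr_U(c)$, which is the geometric input that makes the contraction argument close. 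A secondary, more routine point is the limiting argument in the first step, where one must check that the candidate upper bound $B_{x^*}(r^*)$ genuinely stays inside $U$ all the way out to its boundary sphere.
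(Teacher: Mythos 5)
Your argument is, in substance, the paper's own proof with the details filled in: the paper's entire proof is two sentences --- by compactness of $U$, each ball $B_p(\bfr(p))$ is contained in some maximal ball $B_q(\bfr_U(q))$ with $q\in C_U$, and then, ``since the map $f$ contracts the distance between centers of the balls,'' one concludes $B_{f(p)}(\bfr(p))\subset B_{f(q)}(\bfr_U(q))$. Your Zorn's lemma construction makes the paper's ``due to compactness'' precise, and your triangle-inequality computation is exactly the content of the paper's second sentence. So this is the same approach, correctly executed in the model spaces.

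The obstacle you flag is genuine, and it is worth noting that the paper's one-line transfer step silently asserts precisely the implication you doubt. In a general complete manifold it really does fail: on a flat cylinder of circumference $1$, taking $x$ and $c$ on the same cross-sectional circle at angular distance $1/2$, one checks $B_x(r)\subseteq B_c(R)$ with $R=\sqrt{r^2+1/4}<r+d(x,c)$, and a contraction placing $f(x)$ and $f(c)$ at distance $1/2$ along a generator yields a point of $B_{f(x)}(r)$ at distance $r+1/2>R$ from $f(c)$. So neither your argument nor the paper's establishes the lemma in the stated generality of an arbitrary complete connected Riemannian $M$; both genuinely use the model-space geometry, which is the only setting in which the lemma is applied. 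One correction, though: your hedge about ``size restrictions'' on $\bbS^n$ is unnecessary, and as written it leaves a gap, since the lemma imposes no bound on the radii. The sphere case closes unconditionally by a dichotomy: if $\bfr_U(c)\ge\pi$ then $B_{f(c)}(\bfr_U(c))=\bbS^n$ and the inclusion is trivial; if $\bfr_U(c)<\pi$, then $B_x(\bfr(x))\subseteq B_c(\bfr_U(c))$ does force $d(x,c)+\bfr(x)\le\bfr_U(c)$ --- for if $d(x,c)+\bfr(x)>\pi$, the antipode of $c$ would lie in $B_x(\bfr(x))$ yet at distance $\pi>\bfr_U(c)$ from $c$, a contradiction, and if $d(x,c)+\bfr(x)\le\pi$, the minimizing geodesic from $c$ through $x$ extended by $\bfr(x)$ remains minimizing and exhibits a point of $B_x(\bfr(x))$ at distance exactly $d(x,c)+\bfr(x)$ from $c$. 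With that substitution your proof is complete in all three spaces $\bbE^n$, $\bbS^n$, $\bbH^n$.
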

\begin{proof}
Due to compactness of $U$, for every point $p\in X$, the ball $B_p(\bfr(p))$ is contained in some maximal ball $B_q(\bfr_U(q))$, where $q\in C_U$ is a point from the central set, and $d(p,q)+\bfr(p)\le\bfr_U(q)$. Since the map $f$ contracts the distance between centers of the balls, we have $B_{f(p)}(\bfr(p))\subset B_{f(q)}(\bfr_U(q))$, which implies the statement of the lemma.
\end{proof}

Let us introduce the following notation: let $U\subset M$ be a closed set and let $X\subset C_U$ be a closed non-empty subset of $C_U$. For a contraction $f\colon C_U\to M$, we denote by $U_{X,f}\subset M$ the set $U_{X,f}=B_X(f,\bfr_U)$. If $f=\mathrm{id}$, then we abbreviate the notation by writing $U_X$ instead of $U_{X,f}$. 

 The following basic lemma immediately implies Lemma~\ref{U_X_compact_lemma} from Section~\ref{method_sec}.

\begin{lemma}\label{compact_U_Xf_lemma}
Let $U\subset M$ be a compact set and let $X\subset C_U$ be a closed non-empty set. Then for any contraction $f\colon C_U\to M$, the set $U_{X,f}$ is compact.
\end{lemma}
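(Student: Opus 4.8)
The plan is to argue sequentially. Since $U_{X,f}=B_X(f,\bfr_U)=\bigcup_{x\in X}B_{f(x)}(\bfr_U(x))$ is an a priori infinite union of closed balls, I would take an arbitrary convergent sequence of points of $U_{X,f}$ and show that its limit again lies in one of these balls. The two ingredients that make this work are the compactness of the index set $X$ and the continuity of both the relocation map $f$ and the radius function $\bfr_U$.

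First I would record that $X$ is compact: it is a closed subset of $C_U\subset U$, and $U$ is compact by hypothesis, so $X$ is a closed subset of a compact set. Next I would establish the key analytic fact that the radius function is continuous. For $x\in C_U$ the radius of the maximal ball equals the inradius $\rho(x):=\sup\{r\ge 0 : B_x(r)\subset U\}$, and I would check the elementary identity $\rho(x)=d(x,M\setminus U)$, the distance from $x$ to the complement of $U$: if $r<d(x,M\setminus U)$ then every point within distance $r$ of $x$ lies in $U$, whence $B_x(r)\subset U$, while if $r>d(x,M\setminus U)$ then $B_x(r)$ meets $M\setminus U$. Since the distance to a fixed set is $1$-Lipschitz, $\rho$ is continuous on $M$, and therefore $\bfr_U=\rho|_{C_U}$ is continuous. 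The contraction $f$ is $1$-Lipschitz, hence continuous as well.

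With these facts in hand the limiting argument is routine. Let $p_n\to p$ with $p_n\in U_{X,f}$, and choose $x_n\in X$ with $d(p_n,f(x_n))\le\bfr_U(x_n)$. By compactness of $X$, after passing to a subsequence we may assume $x_n\to x_*\in X$. Continuity of $f$ gives $f(x_n)\to f(x_*)$, continuity of the metric gives $d(p_n,f(x_n))\to d(p,f(x_*))$, and continuity of $\bfr_U$ gives $\bfr_U(x_n)\to\bfr_U(x_*)$; passing to the limit in the inequality yields $d(p,f(x_*))\le\bfr_U(x_*)$, i.e. $p\in B_{f(x_*)}(\bfr_U(x_*))\subset U_{X,f}$. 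Hence $U_{X,f}$ contains all of its limit points and is closed.

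The only step that is more than bookkeeping is the continuity of $\bfr_U$; once the identification $\bfr_U(x)=d(x,M\setminus U)$ is in place, everything else follows from standard compactness and continuity. I note that upper semicontinuity of $\bfr_U$ already suffices for the limiting step, since one only needs $d(p,f(x_*))\le\limsup_n\bfr_U(x_n)\le\bfr_U(x_*)$; this leaves some flexibility if one prefers to argue with the radius function directly rather than through the distance-to-complement identity.
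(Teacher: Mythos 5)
Your proof is correct, and its skeleton matches the paper's: both hinge on compactness of $X$ (a closed subset of the compact set $U$ --- note that both you and the paper must read ``closed'' as closed in $M$, since $C_U$ itself need not be closed) and on extracting a convergent subsequence of centers; the paper just phrases it contrapositively, starting from a boundary point $p\in\partial U_{X,f}\setminus U_{X,f}$. The genuine difference is the regularity input for the radius function. You prove outright that $\bfr_U$ is $1$-Lipschitz via the identity $\bfr_U(x)=d(x,M\setminus U)$ on $C_U$ (valid whenever $U\neq M$; in the degenerate case $U=M$ the function $\bfr_U$ is itself ill-defined, so nothing is lost), and then pass to the limit directly. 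The paper never establishes continuity of $\bfr_U$: it derives from maximality of $B_x(\bfr_U(x))$ together with the contraction property of $f$ the one-sided inclusion $B_{f(y)}(\bfr_U(y))\subset B_{f(x)}(\bfr_U(x)+\varepsilon/2)$ for $y\in X$ with $d(x,y)<\varepsilon/4$ --- in effect only the upper bound $\bfr_U(y)\le\bfr_U(x)+d(x,y)$, which is precisely the upper semicontinuity you correctly note is all the limiting step requires. Your route buys two things: a clean reusable fact (global Lipschitz continuity of the radius function, not just the one-sided estimate), and a strictly more general conclusion, since your argument uses $f$ only through its continuity, so the lemma holds verbatim for any continuous $f\colon C_U\to M$ rather than only for contractions. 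The paper's version is marginally more self-contained, extracting the needed estimate directly from maximality without introducing the distance-to-complement identity. One small point worth making explicit if you write this up: for $x\in C_U$ the maximal ball centered at $x$ has radius exactly $d(x,M\setminus U)$ because, by completeness (Hopf--Rinow), the infimum defining $d(x,M\setminus U)$ is attained, so $B_x(r)$ is \emph{properly} contained in $B_x(d(x,M\setminus U))$ for $r<d(x,M\setminus U)$; this is the elementary verification your sketch defers.
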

\begin{proof}
Assume that there exists a point $p\in\partial U_{X,f}$, such that $p\not\in U_{X,f}$. Then there exists a sequence of points $x_1,x_2,\dots$ in $X$, such that 
\begin{equation}\label{1_k_inequality}
d(f(x_k),p)-\bfr_U(x_k)<1/k, \quad\text{for all } k\in\bbN.
\end{equation}
From compactness of the set $X$ it follows that the sequence $\{x_k\}$ has a limit point $x\in X$. Since $p\not\in U_{X,f}$, the number $\varepsilon=d(f(x),p)-\bfr_U(x)$ is strictly positive. Finally, since $B_x(\bfr_U(x))$ is a maximal ball in $U$ and $f$ is a contraction, this implies that for every $y\in X$, such that $d(x,y)<\varepsilon/4$, we have $B_{f(y)}(\bfr_U(y))\subset B_{f(x)}(\bfr_U(x)+\varepsilon/2)$, which means that there are infinitely many values of $k$, such that 
\begin{equation*}
d(f(x_k),p)-\bfr_U(x_k)\ge\varepsilon/2.
\end{equation*}
Thus, we arrive at a contradiction with~(\ref{1_k_inequality}).
\end{proof}

\subsection{Central set of a compact region with smooth boundary}\label{cut_l_smooth_sec}
From now on, let $U\subset M$ be a compact connected subset of $M$, such that $S=\partial U$ is a $C^1$-smooth codimension one submanifold of $M$. 
Below we outline some basic facts about the central set. A more rigorous exposition can be found for example in~\cite{Chavel_2006}. We note that in the case when $S$ is a smooth connected submanifold of $M$, the notion of the central set $C_U$ is strongly related to the notion of the \textit{cut locus} of $S$. More precisely, $C_U$ is the connected component of the cut locus of $S$ that is contained in $U$.

We let $NS$ denote the unit normal bundle to the submanifold $S$.
Every element $n\in {NS}$ of the unit normal bundle defines a geodesic on the manifold $M$. Starting from the base point $p\in S$ of the vector $n$ and moving along this geodesic towards the interior of $U$, we must eventually hit the central set $C_U$ at some point $q\in C_U$, since otherwise a compact set $U$ would contain balls of arbitrarily large radius. We denote by $\gamma_n\subset U$ the closed segment of the above geodesic that is contained in $U$ and whose endpoints are $p$ and $q$. Let $l_n$ be the length of $\gamma_n$.  

Let $d_U\colon U\to\bbR^+$ denote the ``distance to the boundary'' function. Namely, $d_U(p)$ is the distance from a point $p\in U$ to the submanifold $S$. It is well known that the distance $d_U(p)$ must be obtained along some geodesic segment $\gamma_n$ that passes through the point $p$. In particular, this implies that 
$
\bigcup_{n\in {NS}}\gamma_n=U
$.
At the same time it is obvious from the definition of the central set that every point $p\in U\setminus C_U$ belongs to exactly one geodesic segment $\gamma_n$.
For $p\in U\setminus C_U$ and $t\in[0,1]$ we define $p_t\in U$ to be the point on this geodesic segment $\gamma_n$, such that $p_t$ is distance $d_U(p)+t(l_n-d_U(p))$ away from $S$.

We define the function $F_U\colon U\times[0,1]\to U$ in the following way: 
\begin{equation*}
F_U(p,t)=
\begin{cases}
p,& \text{if } p\in C_U\\
p_t,&  \text{if } p\in U\setminus C_U.
\end{cases}
\end{equation*}


The following statement was essentially proved in Theorem~2 of~\cite{Milman_Waksman}.
\begin{lemma}\label{retract_lemma}
Let the boundary $S=\partial U$ of a compact subset $U\subset M$ be a $C^1$-smooth codimension one submanifold of $M$. Then the central set $C_U\subset M$ is compact, if and only if the map $F_U$ is a continuous deformation retraction of $U$ onto $C_U$. 
\end{lemma}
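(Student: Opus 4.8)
The plan is to separate the two implications and to observe at the outset that every condition in the definition of a deformation retraction, other than continuity, is forced by the construction: $F_U(\cdot,0)=\mathrm{id}_U$ and $F_U(p,t)=p$ for $p\in C_U$ hold by definition, while for $p\in U\setminus C_U$ the point $F_U(p,1)$ is the far endpoint $q$ of the segment $\gamma_n$, which lies in $C_U$. Thus in both directions the whole issue is the continuity of $F_U$.

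The reverse implication is then immediate: if $F_U$ is continuous, the map $r:=F_U(\cdot,1)\colon U\to C_U$ is a continuous surjection, so $C_U=r(U)$ is the continuous image of the compact set $U$ and is therefore compact.

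For the forward implication I would assume $C_U$ compact and reduce the continuity of $F_U$ to the continuity of a single auxiliary function. On the open set $U\setminus C_U$ every point $p$ has a unique foot point $\pi(p)\in S$ and inward unit normal $v(p)$, so that $p=\exp_{\pi(p)}(d_U(p)\,v(p))$ and
$$
F_U(p,t)=\exp_{\pi(p)}\bigl((d_U(p)+t(l_n-d_U(p)))\,v(p)\bigr),
$$
where $l_n=\tau(n)$ is the value of the cut-time function $\tau\colon NS\to(0,\infty)$ at $n=(\pi(p),v(p))$. The maps $d_U$ (which is $1$-Lipschitz) and $\pi,v$ (continuous wherever the nearest point is unique, by compactness of $S$) pose no difficulty, so everything hinges on the continuity of $\tau$.

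The hard part, and the only place where compactness of $C_U$ enters, is proving that $\tau$ is continuous. Upper semicontinuity of $\tau$ is automatic, since whenever $\gamma_{n_k}$ minimizes the distance to $S$ up to a length close to $\tau(n)$ one recovers in the limit that $\gamma_n$ minimizes up to that length, forcing $\limsup_k\tau(n_k)\le\tau(n)$; so I would concentrate on lower semicontinuity. Suppose $n_k\to n$ in $NS$ but $\tau(n_k)\to L<\tau(n)$. The points $c_k=\gamma_{n_k}(\tau(n_k))$ are cut points, hence lie in $C_U$, and $c_k\to\gamma_n(L)$; since $C_U$ is closed, $\gamma_n(L)\in C_U$. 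On the other hand $L<\tau(n)$ means $\gamma_n$ minimizes the distance to $S$ strictly beyond $\gamma_n(L)$, so $\gamma_n(L)$ is an interior point of a minimizing segment and therefore has a unique minimizing geodesic to $S$ and no conjugate point, giving $\gamma_n(L)\notin C_U$, a contradiction. With $\tau$ continuous, continuity of $F_U$ on $U\setminus C_U$ follows from the displayed formula, and continuity at a point $p_0\in C_U$ follows by a squeeze: for $p_k\to p_0$ with $p_k\notin C_U$ one extracts $n(p_k)\to n_*$ using compactness of $NS$, checks that $\gamma_{n_*}$ minimizes to $p_0$ so that $\tau(n_*)=d_U(p_0)$ (otherwise $p_0\notin C_U$), and concludes from continuity of $\tau$ that the cut points $F_U(p_k,1)\to p_0$; since $F_U(p_k,t_k)$ lies between $p_k$ and this cut point, it converges to $p_0=F_U(p_0,t_0)$ for any $t_k\to t_0$. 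The substantive obstacle throughout is the lower semicontinuity of $\tau$, which is exactly the property that can fail when $C_U$ is not closed.
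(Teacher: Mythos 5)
Your proposal is correct in substance, but note that the paper contains no proof of this lemma to compare against: the statement is attributed to Theorem~2 of~\cite{Milman_Waksman}, proved there for bounded domains in Euclidean space (hence the paper's ``essentially proved''). Your argument therefore reconstructs, in the Riemannian setting, what is in effect the outsourced proof, and it identifies the right crux. The reduction of everything to continuity of the cut-time function $\tau$ on the compact unit normal bundle $NS$ is the standard and correct move; the reverse implication is indeed immediate ($C_U=F_U(U,1)$ is the continuous image of a compact set); upper semicontinuity of $\tau$ is automatic as you say; and your lower-semicontinuity argument ($c_k\in C_U$, $c_k\to\gamma_n(L)$, $C_U$ closed, yet $\gamma_n(L)\notin C_U$ because $L<\tau(n)$) is exactly the step where compactness of $C_U$ enters, consistent with the failure for merely $C^1$ boundaries described in Remark~\ref{cont_remark}. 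The squeeze argument at points of $C_U$, via subsequence extraction in $NS$ and the observation that any limit normal $n_*$ must satisfy $\tau(n_*)=d_U(p_0)$, is also sound.

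Two points are asserted rather than proved, and one is argued through the wrong notion. First, you set $l_n=\tau(n)$ without comment. With the paper's definitions ($C_U$ consists of centers of maximal balls, $l_n$ is the first meeting of $\gamma_n$ with $C_U$), the inclusion $\gamma_n(s)\notin C_U$ for $s<\tau(n)$ follows from the triangle-inequality containment $B_{\gamma_n(s)}(s)\subsetneq B_{\gamma_n(s')}(s')$ for $s<s'\le\tau(n)$, but the other half, $\gamma_n(\tau(n))\in C_U$, needs an argument: if $B_{\gamma_n(\tau(n))}(\tau(n))\subsetneq B_y(r)\subset U$, then $p=\gamma_n(0)$ lies on $\partial B_y(r)$ and minimizes $d(\cdot,y)$ on $S$, so by first variation the minimizing geodesic from $p$ to $y$ leaves $S$ along the inward normal, forcing $y=\gamma_n(r)$ with $r>\tau(n)$ and $d_U(y)=r$, contradicting the definition of the cut time; this tangency argument is precisely where the general Riemannian statement needs care beyond the Euclidean source. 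Second, uniqueness (hence continuity) of the foot-point map on $U\setminus C_U$ is the assertion that points with two minimizing segments to $S$ lie in $C_U$; the paper states this fact in its Subsection~4.1, so leaning on it is legitimate, but it is of the same tangency type. Third, your justification that $\gamma_n(L)\notin C_U$ via ``unique minimizing geodesic and no conjugate point'' addresses membership in the cut locus of $S$ rather than in the central set; the direct argument --- the ball at $\gamma_n(L)$ is properly contained in the ball at $\gamma_n(\tau(n))$, hence not maximal --- is shorter and is what the definition of $C_U$ actually requires. (Note also that for a $C^1$ boundary one may have $\tau(n)=0$, i.e., maximal balls of zero radius, which the paper's definition explicitly allows; your argument tolerates this degenerate case.)
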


\begin{remark}\label{cont_remark}
It is shown in the same paper~\cite{Milman_Waksman} that if the boundary $S=\partial U$ is $C^1$-smooth, then the central set $C_U$ does not have to be compact, while $C^2$-smoothness of $S$ implies compactness of $C_U$. Even if the boundary $S$ is $C^\infty$-smooth, the central set can be quite wild. For example, it might be nontriangulable~\cite{Gluck_Singer_78}. On the other hand, in~\cite{Itoh_Tanaka} it is shown that the function $\rho\colon S\to\bbR^+$ defined by $\rho(p)=d(p,F_U(p,1))$, is Lipschitz, which implies that the canonical interior metric $\delta$ can be introduced on $C_U$, so that $(C_U,\delta)$ is a locally compact and complete length space.
\end{remark}

\subsection{Relative central set}\label{rel_cut_loc_sec}
The following definition seems to be new, as we were not able to find it in the literature.
\begin{definition}[Relative central set]
Let $U\subset M$ be a closed set, and let $C_U$ be the central set of $U$. For a point $p\in U$, we will denote by $C_{U,p}\subset C_U$ \textit{the central set of} $U$ \textit{relative to} $p$ which is defined as the set of all points $x\in C_U$, such that $p\in B_x(\bfr_U(x))$.
\end{definition}

We will also need the following definition:
\begin{definition}
Let $U\subset M$ be a closed set. A point $p\in U$ is called \textit{a standard point} in $U$, if for every point $q\in U$ that can be connected with $p$ by two distinct geodesic segments of lengths $l_1$ and $l_2$ respectively, the closed ball $B_q(\max\{l_1,l_2\})$ is not contained in $U$.
\end{definition}

Now we will assume that $U\subset M$ is a compact connected subset of $M$, such that $S=\partial U$ is a $C^1$-smooth codimension one submanifold of~$M$. 
The following lemma states that if the central set $C_U$ is compact, then the relative central set $C_{U,p}$ of a standard point in $U$ is always contractible. This is essentially the key lemma for the proof of Theorem~\ref{main_theorem}.

\begin{lemma}\label{relative_lemma}
Let $U\subset M$ be a compact connected subset, such that $S=\partial U$ is a $C^1$-smooth codimension one submanifold of $M$, and the central set $C_U$ is compact. 
Let $p\in U$ be a standard point in $U$. 
Then the set $C_{U,p}$ is compact, non-empty and homotopy equivalent to a point.
\end{lemma}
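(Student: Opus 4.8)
The plan is to establish the three assertions separately: non-emptiness and compactness are routine, so the real content is contractibility, which I would obtain by an explicit contraction built from the unique geodesics toward $p$ (supplied by the standard-point hypothesis) composed with the central-set retraction $F_U$.

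First I record two elementary facts. Because the maximal ball centered at $x$ has the largest radius of any ball of $U$ with that center, one has $\bfr_U(x)=d_U(x)$ for every $x\in C_U$; since $d_U$ is $1$-Lipschitz, $\bfr_U$ is continuous on $C_U$. Hence $C_{U,p}=\{x\in C_U : d(p,x)\le \bfr_U(x)\}$ is closed in the compact set $C_U$, and therefore compact. For non-emptiness, applying the compactness argument from the proof of Lemma~\ref{cut_locus_optimal_lemma} to the degenerate ball $B_p(0)=\{p\}$ produces a maximal ball containing $p$, whose center lies in $C_{U,p}$.

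Next comes the decisive use of the hypothesis. If $x\in C_{U,p}$, then $B_x(d(p,x))\subseteq B_x(\bfr_U(x))\subset U$, so the standard-point property applied with $q=x$ forbids two distinct minimizing geodesics from $p$ to $x$; thus every $x\in C_{U,p}$ is joined to $p$ by a \emph{unique} minimizing geodesic $\sigma_x$, normalized so that $\sigma_x(0)=x$ and $\sigma_x(1)=p$. By the standard compactness argument for minimizing geodesics, uniqueness upgrades to joint continuity of the map $(x,t)\mapsto\sigma_x(t)$ on $C_{U,p}\times[0,1]$.

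I would then define $H\colon C_{U,p}\times[0,1]\to C_U$ by $H(x,t)=F_U(\sigma_x(t),1)$, the central-set retract (Lemma~\ref{retract_lemma}) of the point obtained by sliding $x$ a fraction $t$ of the way toward $p$. Continuity of $\sigma$ and of $F_U$ makes $H$ continuous; since $F_U(\cdot,1)$ fixes $C_U$ we get $H(x,0)=x$; and since $\sigma_x(1)=p$ for all $x$, the value $H(x,1)=F_U(p,1)$ is a single point independent of $x$. The heart of the matter, and the step I expect to be the main obstacle, is to verify that $H$ never leaves $C_{U,p}$. Writing $y_t=\sigma_x(t)$, so that $d(p,y_t)=(1-t)\,d(p,x)$ and $d(x,y_t)=t\,d(p,x)\le \bfr_U(x)$ (hence $y_t\in U$), the inclusion $B_{y_t}(\bfr_U(x)-d(x,y_t))\subset B_x(\bfr_U(x))\subset U$ gives $d_U(y_t)\ge \bfr_U(x)-t\,d(p,x)\ge d(p,x)-t\,d(p,x)=d(p,y_t)$, where the last inequality uses $x\in C_{U,p}$. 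Feeding $d(p,y_t)\le d_U(y_t)$ into a triangle-inequality estimate along the normal geodesic that defines $F_U(y_t,1)$ then yields $d\big(p,H(x,t)\big)\le \bfr_U\big(H(x,t)\big)$, i.e. $H(x,t)\in C_{U,p}$; the same estimate at $t=1$ shows the terminal point $F_U(p,1)$ itself lies in $C_{U,p}$. Thus $H$ is a contraction of $C_{U,p}$ to a point, which proves the claimed homotopy equivalence.
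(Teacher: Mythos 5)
Your proof is correct, and it runs on the same two engines as the paper's argument --- the standard-point hypothesis, which gives a \emph{unique} minimizing geodesic from each $x\in C_{U,p}$ to $p$ (since $B_x(d(p,x))\subset B_x(\bfr_U(x))\subset U$), and the retraction $F_U$ of Lemma~\ref{retract_lemma} --- but it assembles them genuinely differently. The paper introduces the auxiliary set $D_p=\{x\in U : B_x(d(x,p))\subset U\}$, shows that standardness makes $D_p$ a geodesically star-shaped (hence contractible) union of segments $\xi_v$ emanating from $p$, and then observes that inballs grow along normal geodesics ($B_y(d_U(y))\subset B_x(d_U(x))$ when $x$ lies deeper on the same segment $\gamma_n$), so that $F_U$ restricted to $D_p\times[0,1]$ deformation-retracts $D_p$ onto $C_{U,p}=D_p\cap C_U$; contractibility of $C_{U,p}$ then follows from that of $D_p$. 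You instead build a single null-homotopy $H(x,t)=F_U(\sigma_x(t),1)$ on $C_{U,p}$ itself, never mentioning $D_p$ --- though your chain $d_U(y_t)\ge\bfr_U(x)-t\,d(p,x)\ge d(p,y_t)$ is exactly the verification that $y_t\in D_p$, and the triangle-inequality step you flagged as the main obstacle does close: if $y_t\notin C_U$ and lies on the normal segment $\gamma_n$ of length $l_n$, then $d\bigl(p,F_U(y_t,1)\bigr)\le d(p,y_t)+d\bigl(y_t,F_U(y_t,1)\bigr)\le d_U(y_t)+(l_n-d_U(y_t))=l_n=\bfr_U\bigl(F_U(y_t,1)\bigr)$, using that $d_U$ equals $l_n$ at the endpoint of $\gamma_n$ and that $\bfr_U=d_U$ on $C_U$ (the case $y_t\in C_U$ is immediate since $F_U$ fixes $C_U$). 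What your route buys: it avoids the paper's somewhat informal claim that the union of the segments $\xi_v$ is contractible, and it makes explicit the continuity input --- joint continuity of $(x,t)\mapsto\sigma_x(t)$, which follows from uniqueness of minimizers via an Arzel\`a--Ascoli argument and which the paper's contraction of $D_p$ along the $\xi_v$ tacitly requires as well. What the paper's route buys: a computation-free structural conclusion ($C_{U,p}$ is a deformation retract of the star-shaped set $D_p$) in place of your explicit estimates. Your compactness argument (via $\bfr_U=d_U|_{C_U}$, hence $1$-Lipschitz, so $C_{U,p}$ is closed in the compact $C_U$) and your non-emptiness argument (every ball, including $B_p(0)$, is contained in a maximal ball by the compactness argument in Lemma~\ref{cut_locus_optimal_lemma}) are both sound and fill in points the paper leaves implicit.
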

\begin{proof}
We denote by $D_p\subset U$ the set of all points $x\in U$, such that the closed geodesic ball $B_x(d(x,p))$ is contained in $U$. It is easy to verify that $D_p$ is a closed subset of $M$ and $C_{U,p}=D_p\cap C_U$.

Another description of the set $D_p$ can be given in the following way:
for every vector $v\in T_pM$ with $\|v\|=1$, consider the maximal closed geodesic segment $\xi_v\subset U$ starting at $p$ and going in the direction of the vector $v$, such that for every point $x\in\xi_v$, the closed geodesic ball $B_x(d(x,p))$ is contained in $U$. Since $p$ is a standard point in $U$, it follows that $\xi_v\cap\xi_u=\{p\}$, for every pair of distinct unit vectors $u,v\in T_pM$. Moreover, from this observation and from the construction of geodesic segments $\xi_v$ it follows that 
\begin{equation*}
D_p=\bigcup_{v\in T_pM,\\ \|v\|=1}\xi_v,
\end{equation*}
and $D_p$ is homotopy equivalent to a point.

Finally, we observe that if two points $x,y$ belong to the same geodesic segment $\gamma_n$ defined in Subsection~\ref{cut_l_smooth_sec}, and $d_U(x)>d_U(y)$, then $B_y(d_U(y))\subset B_x(d_U(x))$. This implies that if $p\in B_y(d_U(y))$, then also $p\in B_x(d_U(x))$. This means that if $y\in D_p$, then also $x\in D_p$. From this and from Lemma~\ref{retract_lemma} we conclude that the map $F_U$ restricted to the set $D_p\times[0,1]$ is a continuous deformation retraction of $D_p$ onto $C_{U,p}$, hence $C_{U,p}$ is homotopy equivalent to a point.
\end{proof}

\begin{remark}
If the point $p$ is not standard in the set $U$, then the result of Lemma~\ref{relative_lemma} might not hold. For example, let $M$ be the cylinder $\bbC\slash\bbZ$ and let $U\subset M$ be the set $\{z\in\bbC\mid|\Imm z|\le 1\}\slash\bbZ$. Then the central set of $U$ as well as the relative central set of the point zero is the unit circle $\bbR\slash\bbZ$ which is not contractible.
\end{remark}

\begin{corollary}\label{relative_corollary_1}
Let $U\subset M$ be the same as in Lemma~\ref{relative_lemma} and assume that every point in $U$ is standard. Let $X,Y\subset C_U$ be closed non-empty sets, such that $X\cup Y=C_U$. Then 
\begin{equation*}
U_X\cap U_Y=U_{X\cap Y}.
\end{equation*}
\end{corollary}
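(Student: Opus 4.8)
The plan is to prove the two inclusions separately, with essentially all of the real content delegated to the contractibility statement of Lemma~\ref{relative_lemma}. The inclusion $U_{X\cap Y}\subseteq U_X\cap U_Y$ is immediate: any maximal ball $B_z(\bfr_U(z))$ with $z\in X\cap Y$ is simultaneously one of the balls whose union forms $U_X$ and one of those forming $U_Y$, so every point it contains lies in both $U_X$ and $U_Y$. As remarked after Lemma~\ref{crucial_lemma}, this is the trivial direction.

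For the reverse, and harder, inclusion I would fix a point $p\in U_X\cap U_Y$ and reinterpret membership in terms of the relative central set. Since each maximal ball is contained in $U$, we have $U_X,U_Y\subseteq U$, so $p\in U$ and the hypothesis that every point of $U$ is standard lets us apply Lemma~\ref{relative_lemma} to $p$. By the definition of $U_X=B_X(\bfr_U)$, the condition $p\in U_X$ says exactly that $p\in B_x(\bfr_U(x))$ for some $x\in X$, that is, $C_{U,p}\cap X\neq\emptyset$; symmetrically $p\in U_Y$ gives $C_{U,p}\cap Y\neq\emptyset$. Lemma~\ref{relative_lemma} then guarantees that $C_{U,p}$ is nonempty and contractible, hence in particular connected.

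The conclusion now follows from a short topological argument. Because $X\cup Y=C_U\supseteq C_{U,p}$, the two sets $C_{U,p}\cap X$ and $C_{U,p}\cap Y$ cover $C_{U,p}$; both are closed in $C_{U,p}$ (as $X,Y$ are closed) and, as just observed, both are nonempty. If their intersection $C_{U,p}\cap(X\cap Y)$ were empty, these two closed sets would form a separation of the connected space $C_{U,p}$, contradicting connectedness. Hence there exists $z\in C_{U,p}\cap X\cap Y$; for this $z$ we have $p\in B_z(\bfr_U(z))$ with $z\in X\cap Y$, which is precisely the statement $p\in U_{X\cap Y}$. Combining the two inclusions yields the claimed equality.

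I expect the only genuinely delicate point to be the connectedness of $C_{U,p}$, which is exactly what Lemma~\ref{relative_lemma} supplies via contractibility; granting that lemma, the corollary reduces to the elementary fact that a connected space cannot be covered by two disjoint nonempty closed sets. The two places that require care, rather than difficulty, are verifying $p\in U$ so that the standardness hypothesis applies, and correctly translating the set-theoretic conditions $p\in U_X$, $p\in U_Y$ into the intersection conditions $C_{U,p}\cap X\neq\emptyset$ and $C_{U,p}\cap Y\neq\emptyset$.
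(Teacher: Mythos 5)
Your proof is correct and takes essentially the same route as the paper: both reduce membership in $U_X$, $U_Y$ to non-emptiness of $C_{U,p}\cap X$ and $C_{U,p}\cap Y$, then invoke the connectedness of $C_{U,p}$ from Lemma~\ref{relative_lemma} to force a point of $C_{U,p}\cap X\cap Y$. The only difference is that you spell out the closed-cover separation argument that the paper leaves implicit.
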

\begin{proof}
The inclusion $U_{X\cap Y}\subset U_X\cap U_Y$ is obvious. In order to prove that $U_X\cap U_Y \subset U_{X\cap Y}$ we notice that
if a point $p\in U$ belongs both to $U_X$ and $U_Y$, then the relative central set $C_{U,p}$ has a non-empty intersection both with $X$ and with $Y$. Since according to Lemma~\ref{relative_lemma}, the set $C_{U,p}$ is connected, the latter implies that $C_{U,p}$ has a non-empty intersection with $X\cap Y$, hence $p\in U_{X\cap Y}$.
\end{proof}

\subsection{Compact sets with non-smooth boundary}
In order to prove Lemma~\ref{cel_structure_lemma}, Lemma~\ref{crucial_lemma} and Lemma~\ref{sub_cutlocus_lemma} we have to deal with compact sets whose boundary is non-smooth. 
In this subsection we will assume that $U\subset M$ is a compact connected set with a non-empty interior, such that $S=\partial U$ is a locally flat codimension one topological submanifold of $M$, and the radius of every maximal ball in $U$ is greater than some $\varepsilon_0>0$. 
Then for any $\varepsilon>0$, such that $\varepsilon<\varepsilon_0$, the function $\bfr_\varepsilon\colon C_U\to\bbR^+$ defined by the relation
\begin{equation*}
\bfr_\varepsilon = \bfr_U-\varepsilon\cdot\mathbf 1_{C_U},
\end{equation*}
(where $\mathbf 1_{C_U}$ denotes the constant $1$ function on $C_U$) takes positive values and the set 
\begin{equation*}
U^\varepsilon=B_{C_U}(\bfr_\varepsilon)
\end{equation*}
is compact, connected, and has a non-empty interior.

\begin{proposition}\label{smooth_approximation_prop}
For any positive $\varepsilon<\varepsilon_0$, the boundary $\partial U^\varepsilon$ is a $C^1$-smooth codimension one submanifold of $M$.
\end{proposition}
\begin{proof}
Fix a positive real number $\varepsilon_1>0$, such that 
\begin{equation}\label{epsilon_choice_eq}
\varepsilon+\varepsilon_1<\varepsilon_0.
\end{equation}
Consider an arbitrary point $p\in\partial U^\varepsilon$. Since the point $p$ is distance $\varepsilon<\varepsilon_0$ away from $\partial U$, we have $p\not\in C_U$. This implies that there exists a unique point $q\in\partial U$ and a unique geodesic $\gamma\subset U$  passing through $p$ and $q$, such that the length of its segment between $p$ and $q$ is minimal and equal to $\varepsilon$. 

Since $q\in\partial U$, there exists a unique maximal ball $B$ in $U$, such that $B_p(\varepsilon)\subset B$. 
Let $q_1\in \gamma$ be the point on $\gamma$ that is distance $\varepsilon_1$ away from $p$ and lies on the other side from $q$. 
It follows from~(\ref{epsilon_choice_eq}) that $B_{q_1}(\varepsilon+\varepsilon_1)\subset B$, which implies that $d(q_1,\partial U)=\varepsilon+\varepsilon_1$. From this we conclude that $B_{q_1}(\varepsilon_1)\subset U^\varepsilon$.

Finally, we observe that if $q_2\in \gamma$ is the midpoint of the geodesic segment of $\gamma$ between $p$ and $q$, then $B_{q_2}(\varepsilon/2)\cap\partial U^\varepsilon=\{p\}$. Thus, $\partial U^\varepsilon$ is ``squeezed'' between the two balls $B_{q_1}(\varepsilon_1)$ and $B_{q_2}(\varepsilon/2)$ (i.e., intersection of any two of the sets $B_{q_1}(\varepsilon_1)$, $B_{q_2}(\varepsilon/2)$ and $\partial U^\varepsilon$ is equal to $\{p\}$). Now smoothness of $\partial U^\varepsilon$ at the point $p$ follows.
\end{proof}

Let $\intU\subset U$ denote the interior of $U$. Since $\partial U$ is a locally flat manifold, the collaring theorem of~\cite{Brown_62} implies that the sets $U$ and $\intU$ are homotopy equivalent.
It is easy to verify that $C_{U^\varepsilon}=C_U$, for all sufficiently small $\varepsilon>0$. Moreover, for all sufficiently small $\varepsilon>\delta>0$, the functions $F_{U^\varepsilon}$ and $F_{U^\delta}$ (which are defined because of Proposition~\ref{smooth_approximation_prop}) coincide on their common domain of definition $U^\varepsilon\times [0,1]$, hence the functions $F_{U^\varepsilon}$ converge point-wise to a function 
\begin{equation*}
\accentset{\circ}F_U\colon\intU\times[0,1]\to\intU,
\end{equation*}
as $\varepsilon\to 0^+$.
It follows from Remark~\ref{cont_remark}, that the map $\accentset{\circ}F_U$ is not necessarily continuous. 
On the other hand, the fact that $C_{U^\varepsilon}=C_U$ together with Lemma~\ref{retract_lemma} implies the following statement:

\begin{lemma}\label{retract_lemma2}
Let $U\subset M$ be a compact connected set with a non-empty interior, such that $S=\partial U$ is a locally flat codimension one topological submanifold of $M$, and every maximal ball in $U$ has positive radius. Then the central set $C_U$ is compact, if and only if the map $\accentset{\circ}F_U$ is a continuous deformation retraction of $U$ onto $C_U$.
\end{lemma}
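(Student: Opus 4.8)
The plan is to transfer the smooth case, Lemma~\ref{retract_lemma}, from the approximations $U^\varepsilon$ to the singular limit, using the two facts recorded just above: $C_{U^\varepsilon}=C_U$ for all small $\varepsilon>0$, and $F_{U^\varepsilon}=F_{U^\delta}$ on their common domain. Since $\accentset{\circ}F_U$ is by construction the common value of the maps $F_{U^\varepsilon}$ on $\intU=\bigcup_{\varepsilon>0}U^\varepsilon$, I would first prove the equivalence for $\accentset{\circ}F_U$ on its domain $\intU$, and then upgrade the retraction to all of $U$, boundary included, by inserting a collar.

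First I would record the exhaustion property that for $0<\delta<\varepsilon$ one has $U^\varepsilon\subset\intU^{\delta}$, since each defining ball $B_x(\bfr_U(x)-\varepsilon)$ of $U^\varepsilon$ lies in the interior of the corresponding ball $B_x(\bfr_U(x)-\delta)$ of $U^{\delta}$; hence every point of $\intU$ lies in the interior of some $U^{\delta}$, on which $\accentset{\circ}F_U$ coincides with the single map $F_{U^{\delta}}$. This makes the passage to the limit purely local: near any $(p,t)\in\intU\times[0,1]$ the map $\accentset{\circ}F_U$ equals $F_{U^{\delta}}$ on an open neighborhood, and conversely each $F_{U^\varepsilon}$ is a restriction of $\accentset{\circ}F_U$. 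Consequently $\accentset{\circ}F_U$ is continuous on $\intU\times[0,1]$ if and only if every $F_{U^\varepsilon}$ is continuous on $U^\varepsilon\times[0,1]$, and the pointwise deformation-retraction identities $\accentset{\circ}F_U(\cdot,0)=\mathrm{id}$, $\accentset{\circ}F_U(x,t)=x$ for $x\in C_U$, and $\accentset{\circ}F_U(\cdot,1)\in C_U$ hold on $\intU$ if and only if they hold on each $U^\varepsilon$, because the $U^\varepsilon$ cover $\intU$ and all contain $C_U$.

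Feeding this into Lemma~\ref{retract_lemma}, for each small $\varepsilon>0$ the set $U^\varepsilon$ is compact, connected and has $C^1$ boundary, so that lemma gives the equivalence that $C_{U^\varepsilon}$ is compact if and only if $F_{U^\varepsilon}$ is a continuous deformation retraction of $U^\varepsilon$ onto $C_{U^\varepsilon}$. Substituting $C_{U^\varepsilon}=C_U$ and combining with the previous paragraph, I obtain that $C_U$ is compact if and only if $\accentset{\circ}F_U$ is a continuous deformation retraction of $\intU$ onto $C_U$. The ``only if'' half of the lemma on $U$ is then immediate: a continuous deformation retraction of $U$ onto $C_U$ exhibits $C_U$ as the image of the compact set $U$ under its continuous time-one map, so $C_U$ is compact.

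The remaining and main task is the ``if'' direction on all of $U$, where I must account for $\partial U$ and in particular for its finitely many, lower-dimensional non-smooth strata, across which the formula for $\accentset{\circ}F_U$ is not given. For this I would fix one small $\varepsilon>0$ and precede $F_{U^\varepsilon}$ by a collar collapse. Since $\partial U$ is a compact codimension-one topological manifold bounding the topological manifold-with-boundary $U$, and since for small $s>0$ the level sets $\partial U^{s}=\{p\in U:d_U(p)=s\}$ are $C^1$ and foliate the collar $\{0\le d_U\le\varepsilon\}$, there is a deformation retraction of $U$ onto $U^\varepsilon$ that fixes $U^\varepsilon$ pointwise, obtained by pushing each collar point up the $d_U$-foliation to the level $\varepsilon$. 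Composing this collar collapse with $F_{U^\varepsilon}$ yields a continuous deformation retraction of $U$ onto $C_U$ that fixes $C_U$ throughout and agrees with $\accentset{\circ}F_U$ on $U^\varepsilon$; as $\varepsilon\to 0^+$ its restriction to $\intU$ is exactly $\accentset{\circ}F_U$. The hard point is precisely this final step: verifying that the foliation by the $C^1$ hypersurfaces $\partial U^{s}$ extends continuously across the singular limit level $\{d_U=0\}=\partial U$, so that the collar collapse is continuous up to the boundary and the deformation retraction is genuinely defined on all of $U$, even where the interior map $\accentset{\circ}F_U$ by itself does not reach.
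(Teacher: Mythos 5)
Your first three paragraphs reconstruct exactly the paper's own argument: the paper derives Lemma~\ref{retract_lemma2} in a single line from the facts that $C_{U^\varepsilon}=C_U$ for all small $\varepsilon>0$, that the maps $F_{U^\varepsilon}$ agree on common domains and piece together to $\accentset{\circ}F_U$ on $\intU=\bigcup_{\varepsilon>0}U^\varepsilon$, and from Lemma~\ref{retract_lemma} applied to the compact $C^1$-smooth bodies $U^\varepsilon$. Your local-coincidence observation (each point of $\intU\times[0,1]$ has a neighborhood on which $\accentset{\circ}F_U$ equals some $F_{U^\delta}$, and conversely each $F_{U^\varepsilon}$ is a restriction of $\accentset{\circ}F_U$) is a correct and welcome fleshing-out of that one-liner, and the transfer of compactness in both directions goes through as you say. (A cosmetic slip: your ``if'' and ``only if'' labels are interchanged relative to the standard reading of the biconditional, but the content of each implication is right.)

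Your fourth paragraph, however, both pursues a stronger reading of the statement than the paper intends and contains the gap you yourself flag. Since $\accentset{\circ}F_U$ is by definition a map $\intU\times[0,1]\to\intU$, the phrase ``deformation retraction of $U$ onto $C_U$'' can only be understood as the retraction of $\intU$ onto $C_U$; this is precisely how the paper later applies the lemma in the proof of part~(iii) of Lemma~\ref{cel_structure_lemma}, where the boundary is handled separately by the remark that $U$ and $\intU$ are homotopy equivalent because $\partial U$ is a manifold. So no extension of the homotopy across $\partial U$ is required. More seriously, the extension you sketch would actually fail as described, not merely remain unverified: at a non-smooth point $c$ of $\partial U$ where the interior angle exceeds $\pi$ (the typical reflex corner of a ball-polytope, where two boundary spheres cross), there is a cone of unit directions $v$ at $c$, of positive angular width, such that the nearest boundary point of $\exp_c(sv)$ is $c$ itself for all small $s>0$. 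Your prescription of ``pushing each collar point up the $d_U$-foliation to the level $\varepsilon$'' sends every such point to $\exp_c(\varepsilon v)$, independently of $s$; hence the time-one map of your collar collapse carries every punctured neighborhood of $c$ onto a whole arc of $\partial U^\varepsilon$ and admits no continuous value at $c$. Repairing this would require a genuinely different collar (e.g., a topological collar of the manifold $\partial U$ in the sense of Brown), which is exactly what the paper's appeal to the homotopy equivalence $U\simeq\intU$ quietly replaces. In short: restrict the conclusion to $\intU$, as the paper does, and your proof is complete; as written, the final step is both unnecessary and broken at the singular strata.
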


The following lemma is an analog of Corollary~\ref{relative_corollary_1}.

\begin{lemma}\label{rel_intersect_lemma}
Let $U\subset M$ be a compact connected set with a non-empty interior, such that the following holds:
\begin{itemize}
   \item $S=\partial U$ is a locally flat codimension one topological submanifold of~$M$;
   \item every maximal ball in $U$ has positive radius;
   \item the central set $C_U$ is compact;
   \item every point in $U$ is standard.
\end{itemize}
Let $X,Y\subset C_U$ be closed non-empty sets, such that $X\cup Y=C_U$. Then
\begin{equation*}
U_X\cap U_Y=U_{X\cap Y}.
\end{equation*}
\end{lemma}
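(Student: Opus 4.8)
The plan is to mimic the proof of Corollary~\ref{relative_corollary_1}, with the deformation retraction $F_U$ of Lemma~\ref{retract_lemma} replaced by the map $\accentset{\circ}{F}_U$ of Lemma~\ref{retract_lemma2}. The inclusion $U_{X\cap Y}\subseteq U_X\cap U_Y$ is immediate, so only the reverse inclusion needs work. Suppose $p\in U_X\cap U_Y$. By definition of $U_X$ and $U_Y$ there are points $x\in X$ and $y\in Y$ with $p\in B_x(\bfr_U(x))$ and $p\in B_y(\bfr_U(y))$, i.e.\ $x,y\in C_{U,p}$; thus $C_{U,p}$ meets both $X$ and $Y$. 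Since $C_{U,p}\subseteq C_U=X\cup Y$ and $X,Y$ are closed, once we know that $C_{U,p}$ is connected, its two relatively closed pieces $C_{U,p}\cap X$ and $C_{U,p}\cap Y$ cannot be disjoint, and any common point lies in $X\cap Y$, giving $p\in U_{X\cap Y}$. Hence the lemma reduces to the piecewise-smooth analog of Lemma~\ref{relative_lemma}: for every standard point $p\in U$, the set $C_{U,p}$ is non-empty and connected.

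To prove this I would run the argument of Lemma~\ref{relative_lemma} almost verbatim. Set $D_p=\{x\in U:\ B_x(d(x,p))\subseteq U\}$, so that $C_{U,p}=D_p\cap C_U$. The construction of the segments $\xi_v$, the identity $D_p=\bigcup_{\|v\|=1}\xi_v$, and the relation $\xi_v\cap\xi_u=\{p\}$ for distinct directions use only that $p$ is standard, so they carry over unchanged; in particular $D_p$ is star-shaped from $p$ and contractible. The monotonicity observation also survives: if $x,y$ lie on a common geodesic segment $\gamma_n$ (now defined through the smooth approximations $U^\varepsilon$) with $d_U(x)>d_U(y)$, then $B_y(d_U(y))\subseteq B_x(d_U(x))$, so $y\in D_p$ implies $x\in D_p$; as $\accentset{\circ}{F}_U$ moves each point along its $\gamma_n$ in the direction of increasing distance from $S$, it preserves $D_p$ throughout the homotopy. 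Since $C_U$ is compact, Lemma~\ref{retract_lemma2} guarantees that $\accentset{\circ}{F}_U$ is a continuous deformation retraction onto $C_U$; restricting it to $D_p$ then retracts $D_p$ onto $C_{U,p}$, and as a retract of a contractible set, $C_{U,p}$ is connected.

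The one genuine difference from the $C^1$ case is that $\accentset{\circ}{F}_U$ is only defined and continuous on the interior $\intU$, whereas $F_U$ was available on all of $U$; I expect the boundary points to be the \emph{main obstacle}. For $p\in\intU$ the whole of $D_p$ lies in $\intU$ and the previous paragraph applies directly. For a boundary point $p\in\partial U$ one first checks that $D_p\setminus\{p\}\subseteq\intU$, because a ball $B_x(d(x,p))\subseteq U$ with $x\neq p$ has positive radius and so forces $x\in\intU$; moreover in this case $C_{U,p}=\{x\in C_U:\ d(x,p)=\bfr_U(x)\}$, which lies in $C_U\subseteq\intU$. The cone $D_p\setminus\{p\}$ over the connected set of inward directions at $p$ is still connected, and $\accentset{\circ}{F}_U$ retracts it onto $C_{U,p}$ exactly as above, yielding connectedness of $C_{U,p}$. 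The delicate point to verify carefully is precisely this connectedness of $D_p\setminus\{p\}$ and the behavior of the retraction near the apex $p\in\partial U$; alternatively, since $U_X\cap U_Y$ and $U_{X\cap Y}$ are closed by Lemma~\ref{compact_U_Xf_lemma}, one may try to establish the inclusion only for interior points and pass to the closure.
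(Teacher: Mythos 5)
You reduce the lemma correctly to connectedness of the relative central set $C_{U,p}$, and your interior-point analysis is sound: for $p\in\intU$ one indeed has $D_p\subset\intU$, the maps $F_{U^\varepsilon}$ agree on common domains with $d_{U^\varepsilon}=d_U-\varepsilon$, so the monotonicity along the segments $\gamma_n$ transfers, $\accentset{\circ}F_U$ preserves $D_p$, and Lemma~\ref{retract_lemma2} retracts $D_p$ onto $C_{U,p}$ (connectedness, not contractibility, is all the covering argument needs). But this is a genuinely different route from the paper's, which never re-proves the analog of Lemma~\ref{relative_lemma} in the piecewise smooth setting. Instead the paper exploits the smooth approximations $U^\varepsilon=B_{C_U}(\bfr_U-\varepsilon\cdot\mathrm{id})$ already constructed in this subsection: since $C_{U^\varepsilon}=C_U$ and $\partial U^\varepsilon$ is $C^1$, Corollary~\ref{relative_corollary_1} applies verbatim to $U^\varepsilon$ with the same $X$ and $Y$. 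For an interior point $p$ of $U_X\cap U_Y$ the paper finds $\varepsilon>0$ with $p\in U^\varepsilon_X\cap U^\varepsilon_Y$, concludes $p\in U^\varepsilon_{X\cap Y}\subset U_{X\cap Y}$, and then disposes of the remaining points by a closure argument. This is cheaper than transporting the whole retraction machinery to the nonsmooth $U$, though your version covers more points directly (all of $\intU$, not just the interior of $U_X\cap U_Y$).

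The genuine gap is exactly where you flag it: the boundary points $p\in\partial U$. Your ``cone minus apex'' argument requires that the set of admissible inward directions $\{v:\xi_v\neq\{p\}\}$ be connected and that the lengths of the $\xi_v$ be bounded away from zero along paths of directions; neither is established, and at a corner of a piecewise smooth boundary (in a general Riemannian $M$, as the lemma is stated) this would demand a tangent-cone analysis you do not carry out. Your fallback --- prove the inclusion on interior points and pass to closures via Lemma~\ref{compact_U_Xf_lemma} --- is precisely the paper's second step, but as you state it (``one may try'') it is missing the one fact that makes it run: every point of $\partial(U_X\cap U_Y)$, in particular every point of $U_X\cap U_Y$ lying on $\partial U$, must be a limit of interior points of $U_X\cap U_Y$. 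The paper extracts this density from the hypothesis that every maximal ball in $U$ has positive radius, after which closedness of $U_{X\cap Y}$ (Lemma~\ref{compact_U_Xf_lemma}) yields $p\in U_{X\cap Y}$. So to complete your proof, drop the delicate apex analysis altogether, keep your interior-point argument (or the paper's $U^\varepsilon$ shortcut), and supply this density claim explicitly in the closure step.
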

\begin{proof}
The inclusion $U_{X\cap Y}\subset U_X\cap U_Y$ is obvious. Let us prove the opposite inclusion. First, assume that $p\in U_X\cap U_Y$ is an interior point of $U_X\cap U_Y$. Then there exists a positive $\varepsilon>0$, such that $p\in U^\varepsilon_X\cap U^\varepsilon_Y$, hence by Corollary~\ref{relative_corollary_1}, we have $p\in U^\varepsilon_{X\cap Y}$, which implies that $p\in U_{X\cap Y}$.

Finally, since all maximal balls in $U$ have positive radius, every point $p\in\partial (U_X\cap U_Y)$ is a limit point of the interior of $U_X\cap U_Y$, hence by the previous case we have $p\in\partial U_{X\cap Y}$. Since according to  Lemma~\ref{compact_U_Xf_lemma}, the set $U_{X\cap Y}$ is closed, this implies that $p\in U_{X\cap Y}$.
\end{proof}

If $X\subset C_U$ is a closed set, Lemma~\ref{compact_U_Xf_lemma} says that the set $U_X$ is compact, hence we can consider its central set which we will denote by $C_{U_X}$. It is easy to see that $X\subset C_{U_X}$, while the opposite inclusion does not necessarily hold.

\begin{lemma}\label{sub_cutlocus_corollary}
Let $U\subset M$ be the same as in Lemma~\ref{rel_intersect_lemma}, and let $Z\subset X\subset C_U$ be closed non-empty sets, such that $Z\subset X$ is the relative boundary of $X$ in $C_U$. If $C_{U_{Z}}\subset X$, 
then $C_{U_X}=X$.
\end{lemma}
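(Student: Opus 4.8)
The plan is to prove the two inclusions $X\subseteq C_{U_X}$ and $C_{U_X}\subseteq X$ separately. The first is immediate and is already noted in the text: for $x\in X$ the ball $B_x(\bfr_U(x))$ is maximal in $U$, hence a fortiori maximal in the smaller set $U_X\subseteq U$, so $x\in C_{U_X}$. Thus all the content lies in the reverse inclusion $C_{U_X}\subseteq X$. To set up for it, I would introduce $Y=\overline{C_U\setminus X}$. Since $X$ is closed in $C_U$ one has $X\cup Y=C_U$ and $X\cap Y=Z$ (the relative boundary of $X$), and because $Z$ is non-empty, $Y$ is non-empty as well. The key structural input is then Lemma~\ref{rel_intersect_lemma}, which applies since $U$ satisfies all of its hypotheses and yields the identity
$$
U_X\cap U_Y=U_Z.
$$
This identity, together with the hypothesis $C_{U_Z}\subseteq X$, is what drives the whole argument.

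Next I would take an arbitrary $q\in C_{U_X}$ and let $B=B_q(\bfr_{U_X}(q))$ be the corresponding maximal ball in $U_X$. The crucial move is to enlarge $B$ inside the full set $U$: since $B\subseteq U_X\subseteq U$ and $U$ is compact, the argument used in the proof of Lemma~\ref{cut_locus_optimal_lemma} shows that $B$ is contained in some maximal ball $B^{*}=B_{q^{*}}(\bfr_U(q^{*}))$ of $U$ with $q^{*}\in C_U$. I would then split on whether $q^{*}\in X$.

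In the case $q^{*}\in X$, the ball $B^{*}$ is one of the defining balls of $U_X$, so $B^{*}\subseteq U_X$; since $B\subseteq B^{*}\subseteq U_X$ and $B$ is maximal in $U_X$, maximality forces $B=B^{*}$ and hence $q=q^{*}\in X$. In the case $q^{*}\notin X$, we have $q^{*}\in C_U\setminus X\subseteq Y$, so $B^{*}$ is a defining ball of $U_Y$ and $B\subseteq B^{*}\subseteq U_Y$. Combining with $B\subseteq U_X$ and the identity above gives $B\subseteq U_X\cap U_Y=U_Z$. Finally, since $Z\subseteq X$ we have $U_Z\subseteq U_X$, so a ball maximal in $U_X$ and contained in $U_Z$ is also maximal in $U_Z$; therefore $q\in C_{U_Z}$, and the hypothesis $C_{U_Z}\subseteq X$ yields $q\in X$. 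In both cases $q\in X$, which establishes $C_{U_X}\subseteq X$ and completes the proof.

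The main obstacle is not in this bookkeeping but in the single geometric input $U_X\cap U_Y=U_Z$, i.e.\ Lemma~\ref{rel_intersect_lemma}, which ultimately rests on the connectedness of the relative central sets $C_{U,p}$ (Lemma~\ref{relative_lemma}); once that identity is available the present lemma is essentially a formal consequence. Within the present argument the one step requiring care is the enlargement of $B$ to a maximal ball $B^{*}$ of $U$ together with the observation that maximality of $B$ in $U_X$ descends to maximality in the smaller set $U_Z$; here I would use that all maximal balls have positive radius and that $U$, and hence $U_X$, $U_Y$ and $U_Z$, are compact by Lemma~\ref{compact_U_Xf_lemma}, so that the central sets in question are well defined.
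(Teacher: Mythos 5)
Your proof is correct and follows essentially the same route as the paper's: enlarge a maximal ball of $U_X$ to a maximal ball of $U$, observe its center must lie in $Y=\overline{C_U\setminus X}$, apply the identity $U_X\cap U_Y=U_Z$ from Lemma~\ref{rel_intersect_lemma}, descend maximality to $U_Z$, and invoke $C_{U_Z}\subset X$. The only difference is presentational: you argue directly with two cases where the paper runs a proof by contradiction (your case $q^{*}\in X$ corresponds to the paper's implicit justification that the enlarged maximal ball is centered outside $X$), so the mathematical content is identical.
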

\begin{proof}
If a maximal ball in $U$ is contained in $U_X$, then it is also a maximal ball in $U_X$, since $U_X\subset U$. Thus $X\subset C_{U_X}$. This means that in order to prove the lemma, it is enough to show the opposite inclusion.

Define the set $Y\subset C_U$ as the closure $Y=\overline{C_U\setminus X}$. Then $X\cap Y=Z$.
We proceed with a proof by contradiction. Assume that there exists a ball $B\subset U_X$ that is a maximal ball in $U_X$ and whose center is not in $X$. Then there exists a ball $B'\subset U$ centered outside of $X$, such that $B'$ is maximal in $U$ and $B\subset B'$. Since $X\cup Y=C_U$, this implies that the center of the ball $B'$ belongs to $Y$, and $B'\subset U_Y=B_Y(\bfr_U)$. Hence, we have $B\subset U_X\cap U_Y$, and then, according to Lemma~\ref{rel_intersect_lemma}, we have $B\subset U_{Z}$.
Since the ball $B$ is maximal in $U_X$ and $U_{Z}\subset U_X$, the ball $B$ is also maximal in $U_{Z}$, hence, its center belongs to $C_{U_{Z}}$. Finally, since $C_{U_{Z}}\subset X$, we obtain a contradiction. Thus we have $C_{U_X}=X$.
\end{proof}

Now we can give a proof of Lemma~\ref{crucial_lemma} and Lemma~\ref{sub_cutlocus_lemma} modulo the result of Lemma~\ref{cel_structure_lemma}:

\begin{proof}[Proof of Lemma~\ref{crucial_lemma} and Lemma~\ref{sub_cutlocus_lemma}]
We notice that if $M$ is one of the three spaces $\bbE^n$, $\bbS^n$, $\bbH^n$, and $U$ is a proper subset of $M$, then every point in $U$ is standard. Part~(i) of Lemma~\ref{cel_structure_lemma} implies that the central set $C_U$ is compact. Now Lemma~\ref{crucial_lemma} and Lemma~\ref{sub_cutlocus_lemma} follow from Lemma~\ref{rel_intersect_lemma} and Lemma~\ref{sub_cutlocus_corollary} respectively.
\end{proof}

Finally, we pass to the proof of Lemma~\ref{cel_structure_lemma}. 

\begin{proof}[Proof of Lemma~\ref{cel_structure_lemma}]
Let $M$ be one of the three spaces $\bbE^n$, $\bbS^n$ or $\bbH^n$, and 
let $B_1,\dots,B_k\subset M$ be $n$-dimensional closed balls of positive radius, such that
$$U=B_1\cup\dots\cup B_k\subset M$$
is an $n$-dimensional ball-polytope. By definition of a ball-polytope, the boundary $S=\partial U$ is a topological manifold of dimension $n-1$. 

Let $\mathcal C_S$ be a family that consists of all connected components of all intersections of the form
\begin{equation*}
\left(\bigcap_{i\in\alpha}B_i\right)\bigcap S,
\end{equation*}
where $\alpha$ runs over all subsets of the set $\{1,\dots,k\}$. Clearly, $\mathcal C_S$ is a finite family, and
\begin{equation*}
\bigcup_{C\in\mathcal C_S} C=S.
\end{equation*}

A set $C$ from $\mathcal C_S$ will be called \textit{a cell} of $S$. Every cell $C\in\mathcal C_S$ lies in the intersection of finitely many $(n-1)$-dimensional spheres, so if $C$ is not a single point, then there exists a minimal positive integer $m\in\bbN$, for which there is a unique $m$-dimensional sphere in $M$ that contains the cell $C$. We denote this sphere by $S_C\subset M$. For every $m=1,\dots,n-1$ we let $\mathcal C_S^m\subset\mathcal C_S$ represent the set of all $C\in\mathcal C_S$, for which the sphere $S_C$ is $m$-dimensional, and we denote by $\mathcal C_S^0$ the set of all $C\in\mathcal C_S$ that are singletons. Thus, $\mathcal C_S$ is a disjoint union
\begin{equation*}
\mathcal C_S=\bigsqcup_{m=0}^{n-1}\mathcal C_S^m.
\end{equation*}
We will say that a cell $C\in\mathcal C_S$ has dimension $m$, if $C\in\mathcal C_S^m$. 
\begin{definition}
For a cell $C\in\mathcal C_S$, we will say that a point $p\in C$ is an interior point of $C$, if it does not belong to any cell $C'\in\mathcal C_S$ of dimension strictly smaller than $\dim(C)$.
\end{definition}
\noindent In particular, every $0$-dimensional cell $C\in\mathcal C_S^0$ consists of one point that is an interior point of $C$.

For future reference let us formulate the following basic geometric facts:

\begin{proposition}\label{interior_point_prop3}
Every point $p\in S$ is an interior point of some cell $C\in\mathcal C_S$.
\end{proposition}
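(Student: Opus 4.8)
The plan is to deduce the statement from a minimal-dimension selection among the cells that contain the given point. Fix $p \in S$. First I would record the elementary geometric fact underlying everything: since $p \in \partial U$ and $U = \bigcup_i B_i$, the point $p$ cannot lie in the interior of any $B_i$, for otherwise an entire neighborhood of $p$ would lie in $U$, contradicting $p \in \partial U$. As $U$ is closed and $p \in U$, the index set $\alpha_p = \{i : p \in B_i\}$ is non-empty, and for each $i \in \alpha_p$ we have $p \in \partial B_i$. Hence $p \in \bigl(\bigcap_{i \in \alpha_p} B_i\bigr) \cap S$, so the connected component of this intersection containing $p$ is a cell $C_0 \in \mathcal{C}_S$ of well-defined dimension. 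In particular the collection of cells containing $p$ is non-empty (this also follows at once from the already-noted identity $\bigcup_{C \in \mathcal{C}_S} C = S$).

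Next, since $\mathcal{C}_S$ is finite, the cells containing $p$ form a non-empty finite subcollection, so I may choose among them a cell $C$ of minimal dimension. I claim $p$ is then an interior point of $C$. Indeed, if $p$ belonged to some cell $C'$ with $\dim(C') < \dim(C)$, then $C'$ would be a cell containing $p$ of strictly smaller dimension, contradicting the minimality of $\dim(C)$. By the definition of interior point this is exactly the assertion that $p$ is an interior point of $C$, which proves the proposition.

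The only step carrying genuine geometric content is the first one, namely confirming that $p$ actually lies in a cell of the prescribed form $\bigl(\bigcap_{i \in \alpha} B_i\bigr) \cap S$; this rests solely on the observation that a boundary point of the union cannot be an interior point of any constituent ball. Everything after that is a formal consequence of the finiteness of $\mathcal{C}_S$ and of the fact that each cell has a well-defined dimension, so I do not anticipate a substantive obstacle — the passage to a cell of minimal dimension is the whole trick.
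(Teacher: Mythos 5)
Your proof is correct: the paper states Proposition~\ref{interior_point_prop3} without any proof, presenting it as a ``basic geometric fact,'' and your argument --- $p$ lies in some cell because $\bigcup_{C\in\mathcal C_S}C=S$, and a cell of minimal dimension among those containing $p$ has $p$ as an interior point directly by the paper's definition --- is exactly the routine verification the authors evidently had in mind. Two minor observations: your opening step showing $p\in\partial B_i$ for every $i$ with $p\in B_i$ is true but not needed (membership of $p$ in a cell of the form $\bigl(\bigcap_{i\in\alpha}B_i\bigr)\cap S$ only requires $p\in B_i$ and $p\in S$), and the minimal-dimension selection does not even require finiteness of $\mathcal C_S$, since cell dimensions are integers in $\{0,\dots,n-1\}$.
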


\begin{proposition}\label{interior_point_prop2}
Let $C\in\mathcal C_S$ be a cell of positive dimension. Then $p\in C$ is an interior point of $C$, if and only if $p$ belongs only to those spheres $\partial B_i$ which contain the sphere $S_C$.
\end{proposition}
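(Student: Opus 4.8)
The plan is to convert the purely combinatorial notion ``interior point of a cell'' into the geometric condition about which spheres $\partial B_i$ pass through $p$, using repeatedly the fact, valid precisely because $M\in\{\bbE^n,\bbS^n,\bbH^n\}$, that an intersection of geodesic spheres is again a geodesic sphere whose dimension is controlled by transversality. The single geometric lemma I would isolate is this: if $\Sigma_1,\Sigma_2\subset M$ are geodesic spheres with $\Sigma_1\not\subseteq\Sigma_2$, then $\Sigma_1\cap\Sigma_2$ is a geodesic sphere of dimension at most $\dim\Sigma_1-1$. (In $\bbE^n$ this is the radical-hyperplane computation; for $\bbS^n$ and $\bbH^n$ one passes to the standard linear model in which geodesic spheres are slices by affine hyperplanes.)

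First I would record two reductions. Since $U=\bigcup_i B_i$, no point of $S=\partial U$ lies in an open ball $\operatorname{int}B_i$, so for $p\in S$ one has $p\in B_i$ if and only if $p\in\partial B_i$; writing $I(p)=\{i:p\in\partial B_i\}$, it follows that any cell $C$ satisfies $C\subseteq\bigcap_{i\in J}\partial B_i$ and is a connected component of $\bigl(\bigcap_{i\in J}B_i\bigr)\cap S$, where $J=\{i:C\subseteq\partial B_i\}$. Applying the sphere lemma together with the minimality built into the definition of $S_C$ shows that every $\partial B_i$ containing $C$ already contains $S_C$; hence $J=\{i:S_C\subseteq\partial B_i\}$, and since $C\subseteq S_C$ we always have $J\subseteq I(p)$ for $p\in C$. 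The proposition thus reduces to the equivalence: $p$ is interior to $C$ if and only if $I(p)=J$ (the phrase ``$p$ belongs only to spheres containing $S_C$'' being exactly $I(p)\subseteq J$).

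The crux, and the step I expect to be the main obstacle, is to prove that $S_C$ coincides with the connected component $\Sigma$ of $\bigcap_{i\in J}\partial B_i$ that contains $C$. The inclusion $S_C\subseteq\Sigma$ is immediate. For the reverse I would assume $\dim C=\dim S_C<\dim\Sigma$ and derive a contradiction: then $C$, being a component of $K=\Sigma\setminus\bigcup_{l\notin J}\operatorname{int}B_l$, contains no nonempty open subset of $\Sigma$ (otherwise a connected neighborhood of such a point would lie in the single component $C$ and force $\dim C=\dim\Sigma$); hence every point of $C$ is a limit of points of $\bigcup_{l\notin J}\operatorname{int}B_l$ and therefore lies on some $\partial B_l$ with $l\notin J$. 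A Baire-category argument applied to the finite closed cover $C=\bigcup_{l\notin J}(C\cap\partial B_l)$ then yields an index $l_0\notin J$ for which $C\cap\partial B_{l_0}$ has nonempty relative interior, hence full dimension $\dim C$ and the same minimal sphere $S_C$, forcing $S_C\subseteq\partial B_{l_0}$ and so $l_0\in J$, a contradiction. This identifies $\dim C=\dim\Sigma$; consequently a smaller index set yields a larger intersection-component of weakly larger dimension, so that $J'\subseteq J$ implies $\dim C\le\dim C'$.

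With $S_C=\Sigma$ in hand, both implications are short. For the forward direction I argue contrapositively: if some $i_0\in I(p)\setminus J$, then the component $C'$ through $p$ of $\bigl(\bigcap_{i\in J\cup\{i_0\}}B_i\bigr)\cap S$ is a cell containing $p$ with $C'\subseteq C\cap\partial B_{i_0}\subseteq S_C\cap\partial B_{i_0}$, which by the sphere lemma has dimension at most $\dim S_C-1$; thus $p$ lies in a cell of strictly smaller dimension and is not interior. For the converse, let $C'$ be any cell containing $p$ and set $J'=\{i:C'\subseteq\partial B_i\}$; then $J'\subseteq I(p)=J$, and combining $S_C=\Sigma$, $S_{C'}=\Sigma'$ with the dimension monotonicity above gives $\dim C\le\dim C'$, so $p$ belongs to no cell of smaller dimension and is interior to $C$. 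Throughout, the only genuinely geometric ingredient is the sphere-intersection lemma; the remaining work is the dimension bookkeeping that the identification $S_C=\Sigma$ makes possible, together with the routine observation that cells are pieces of geodesic spheres, so that ``empty interior implies lower dimension'' and the Baire argument are legitimate.
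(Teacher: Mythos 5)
Your reduction of the statement to the combinatorial equivalence ``$p$ interior to $C$ iff $I(p)=J$'' (using the sphere-intersection lemma to show $J=\{i: C\subseteq\partial B_i\}=\{i: S_C\subseteq\partial B_i\}$, and the observation that $p\in S\cap B_i$ forces $p\in\partial B_i$) is sound, and your ``only if'' half is correct as written: it uses only the sphere lemma and component nesting. (For the record, the paper states this proposition as a basic geometric fact with no proof, so there is no argument of the author's to compare against.) The genuine gap is in the step you yourself flag as the crux, the identification $S_C=\Sigma$. In the Baire argument you pass from ``$C\cap\partial B_{l_0}$ has nonempty relative interior in $C$'' to ``hence full dimension $\dim C$ and the same minimal sphere $S_C$.'' That inference is false for a general compact connected subset of a sphere: take $C$ to be a wedge of two closed circular arcs lying on distinct circles $\Gamma_1\neq\Gamma_2$ of a common $2$-sphere in $\bbE^3$; then $S_C$ is the $2$-sphere, but a relatively open piece of $C$ inside one arm spans only $\Gamma_1$. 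Since the paper's ``dimension'' of a cell is by definition the dimension of its minimal spanning sphere, a cell could a priori be such a curve, and ruling this out (i.e., showing cells are locally full-dimensional in $S_C$) is essentially the content you are trying to establish, so the argument is circular at exactly the hard point. Two secondary issues: your claim that $C$ is a component of $K=\Sigma\setminus\bigcup_{l\notin J}\operatorname{int}B_l$ tacitly uses $\operatorname{int}U=\bigcup_l\operatorname{int}B_l$ (equivalently $K\subseteq S$), which can fail for degenerate configurations of balls and is not addressed; and in the converse direction your monotonicity argument via $S_{C'}=\Sigma'$ does not cover cells $C'$ that are singletons.

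The good news is that the detour through $S_C=\Sigma$ (and hence the Baire step) is unnecessary. Since $\alpha\subseteq J$ and $C\subseteq\bigcap_{i\in J}\partial B_i\subseteq\bigl(\bigcap_{i\in J}B_i\bigr)\cap S$ is connected, component nesting in both directions gives that $C$ \emph{equals} the connected component through $p$ of $\bigl(\bigcap_{i\in J}B_i\bigr)\cap S$. Now if $I(p)=J$ and $C'\ni p$ is any cell, its index set $\alpha'$ satisfies $\alpha'\subseteq I(p)=J$, so $\bigcap_{i\in\alpha'}B_i\supseteq\bigcap_{i\in J}B_i$ and nesting yields $C'\supseteq C$; monotonicity of the minimal spanning sphere under inclusion then gives $\dim C'\ge\dim C$ (and $C'$ cannot be a singleton since it contains the infinite set $C$), so $p$ is interior. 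This proves the ``if'' direction with exactly the same elementary tools as your ``only if'' direction, and makes the proof complete without ever deciding whether $S_C=\Sigma$.
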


Now we prove the following statement:

\begin{proposition}\label{cell_contain_prop}
If the boundary sphere of a maximal ball in $U$ contains an interior point of a positive dimensional cell $C\in\mathcal C_S$, then this sphere contains the whole cell $C$.
\end{proposition}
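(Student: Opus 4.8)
The plan is to prove the equivalent statement that $\partial B\supseteq S_C$, where $B=B_c(\rho)$ is the maximal ball in question and $S_C$ is the unique sphere spanned by $C$; since $C\subseteq S_C$, this immediately yields $\partial B\supseteq C$. The organizing idea is to locate the center $c$. Note that $\partial B\supseteq S_C$ holds \emph{precisely} when $c$ is equidistant from all points of $S_C$, i.e.\ when $c$ lies on the ``axis'' $L$ of $S_C$ (the totally geodesic locus of points equidistant from $S_C$: an affine flat in $\bbE^n$ and a totally geodesic submanifold in $\bbS^n$ or $\bbH^n$). Indeed, if $c\in L$, then since $p\in S_C\cap\partial B$ the common distance equals $\rho$, so every point of $S_C$ lies on $\partial B$.

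First I would pin down the direction of the radius at $p$. Since $B\subseteq U$ and $p\in\partial B\cap\partial U$, the point $p$ is a nearest point of $\partial U$ to $c$ and $\rho=d(c,\partial U)$. By Proposition~\ref{interior_point_prop2}, the only spheres $\partial B_i$ passing through the interior point $p$ of $C$ are those containing $S_C$; their centers are therefore equidistant from $S_C$, so they lie on $L$, and each inner normal of such a $\partial B_i$ at $p$ is perpendicular to $T_pS_C$. The inscribed condition then forces the initial direction of the geodesic from $p$ to $c$ to lie in the convex cone spanned by these inner normals (otherwise $B$ would leave $U$, which near $p$ coincides with the union of the balls $B_{i_j}$ through $p$). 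Hence this direction is perpendicular to $T_pS_C$, and $c$ lies on the totally geodesic submanifold $P$ through $p$ normal to $S_C$. Inside $P$ the sphere $S_C$ reduces to the antipodal pair $\{p,\bar p\}$ of a diameter, $L\cap P$ is the perpendicular bisector of $[p,\bar p]$, and consequently $c\in L$ \emph{iff} $d(c,\bar p)=d(c,p)=\rho$.

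At this stage one inequality is for free: since $\bar p\in S_C\subseteq\partial U$ and $B\subseteq U$, the antipode $\bar p$ cannot be interior to $B$, so $d(c,\bar p)\ge\rho$; equivalently, $c$ lies on the $p$-side of the bisector $L$, or on it. The crux is to upgrade this to an equality by maximality. Suppose $c\notin L$, and let $c^\ast$ be the point where the geodesic ray from $p$ through $c$ meets $L$ (it lies in $P$). Then $B^\ast:=B_{c^\ast}(d(c^\ast,p))$ is internally tangent to $B$ at $p$ and strictly contains $B$, while $c^\ast\in L$ gives $\partial B^\ast\supseteq S_C$. If one can show $B^\ast\subseteq U$, this contradicts the maximality of $B$, forcing $c=c^\ast\in L$ and completing the proof.

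The hard part is exactly the containment $B^\ast\subseteq U$ — equivalently, that an inscribed ball touching $S_C$ at a relative-interior point but centered off the axis is never maximal. I expect to establish this from the fact that $U$ is a \emph{union} of balls, so its boundary always faces outward: the segment from $c$ to $c^\ast$ points from $p$ toward the interior of $U$ (toward the center of $S_C$), and the excess region $B^\ast\setminus B$ is the corresponding inward bulge, tangent to all of $S_C$ from inside because $c^\ast\in L$. Concretely, the balls $B_{i_j}$ whose boundaries contain $S_C$ are centered on $L$ and make up $\partial U$ near $C$, and the plan is to show that $B^\ast$, being centered on $L$ and passing through $S_C$, is swallowed by $\bigcup_j B_{i_j}\subseteq U$, using $B\subseteq U$ together with the tangency at $p$ to control the position of $c^\ast$ along $L$. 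This monotonicity of the union — the absence of any inner obstruction to enlarging the ball toward the axis — is what eliminates the off-axis case and is the only place where work beyond the first-order tangency analysis is required.
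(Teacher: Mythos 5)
Your route is genuinely different from the paper's, but as written it has one fatal missing step and one incorrect assertion. The missing step is the one you yourself flag: the containment $B^\ast\subseteq U$ is a plan, not a proof, and it is exactly where the whole content of the proposition sits --- the first-order tangency analysis (the inward direction $u$ at $p$ lying in the convex cone of the inner normals of the balls $B_i$ with $S_C\subset\partial B_i$) is correct but by itself cannot distinguish a maximal off-axis ball from a non-maximal one. The step is in fact fillable, and your cone condition is the right input: writing $u=\sum_i\mu_i(v_i-p)$ with $\mu_i\ge 0$, where the $v_i\in L$ are the centers of the $B_i$, the ray $p+tu$ meets $L$ precisely at a \emph{convex combination} $c^\ast=\sum_i\lambda_i v_i$ of the $v_i$; and for the pencil of spheres through $S_C$ with centers on $L$, membership $q\in B_x\bigl(d(x,p)\bigr)$ is an \emph{affine} condition in the center $x$ (with the center $o$ of $S_C$ at the origin in $\bbE^n$ it reads $|q|^2-2\langle q,x\rangle\le s^2$, and in $\bbS^n$, $\bbH^n$ one gets the analogous linear condition in the ambient quadric model), so $c^\ast\in\mathrm{conv}\{v_i\}$ forces $B^\ast\subseteq\bigcup_i B_i\subseteq U$, using Proposition~\ref{interior_point_prop2} to know these are the only balls bounding $U$ near $C$. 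None of this linearity argument appears in your writeup, so the decisive maximality contradiction is unproven.

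The incorrect assertion is ``$\bar p\in S_C\subseteq\partial U$''. Only $S_C\subseteq U$ holds (since $S_C\subseteq\partial B_i\subseteq B_i$); points of $S_C\setminus C$ may be interior to $U$: already in $\bbE^3$, take $S_C=\partial B_1\cap\partial B_2$ a circle partially covered by a third ball $B_3$, so that the antipode $\bar p$ of $p$ on $S_C$ lies in the interior of $U$. Then nothing in your setup a priori excludes $\bar p\in\mathrm{int}\,B$, and the inequality $d(c,\bar p)\ge\rho$ does not follow as claimed. It can be repaired without $\bar p$: if $c$ lay strictly on the far side of $L$, your comparison ball $B^\ast$ would sit \emph{inside} $B$, tangent at $p$, with $S_C\subseteq\partial B^\ast$, forcing $C\setminus\{p\}\subseteq\mathrm{int}\,B\subseteq\mathrm{int}\,U$ and contradicting $C\subseteq\partial U$ --- note this uses only the cell $C$, which genuinely lies on $\partial U$, never the full sphere $S_C$. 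For comparison, the paper sidesteps the variational picture entirely: it takes a \emph{second} contact point $q\neq p$ of the maximal sphere with $S$, invokes the elementary fact that the sphere $S_\alpha=\bigcap_{i\in\alpha}\partial B_i$, tangent to $\partial B$ at $q$ and passing through $p$, must be contained in $\partial B$, and reduces the general case to the all-balls-through-$p$ case by passing to the sub-union $U_\beta$ of balls containing $S_C$. Your approach is salvageable and arguably more quantitative, but both gaps above must be closed.
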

\begin{proof}
Let $p\in C$ be an interior point of $C$ that also belongs to the boundary of an $n$-dimensional ball $B\subset U$. Assume that $B$ is a maximal ball in $U$. We will show that $S_C\subset\partial B$. We split the proof into two cases:

\textit{Case 1:} We weaken the assumption on the set $U=B_1\cup\dots\cup B_k$ by requiring $S=\partial U$ to be a submanifold only locally at point $p$. At the same time we assume that $p\in \partial B_i$, for all $i\in\{1,\dots,k\}$. Then in particular we have  
\begin{equation}\label{S_C_case1_eq}
C=S_C=\bigcap_{i=1}^k\partial B_i.
\end{equation}
Let $q\neq p$ be another point, where $\partial B$ touches $S$, and let $\alpha\subset\{1,\dots,k\}$ be the set of all indexes $i$, such that $q\in\partial B_i$. Consider the sphere
$$
S_\alpha=\bigcap_{i\in\alpha}\partial B_i.
$$
According to~(\ref{S_C_case1_eq}), we have an inclusion $S_C\subset S_\alpha$, hence, $p$ is a common point of $\partial B$ and $S_\alpha$. Since the spheres $S_\alpha$ and $\partial B$ must also be tangent at the point $q$, and $q\neq p$, this implies that $S_\alpha\subset\partial B$, and in particular, $S_C\subset\partial B$.

\textit{Case 2:} Now we consider the general case. Let $\beta\subset\{1,\dots, k\}$ be the set of all integers $i$, such that $S_C\subset B_i$. 
Consider the set $U_\beta=\cup_{i\in\beta}B_i$. Since $U$ is a ball-polytope, it follows from Proposition~\ref{interior_point_prop2} that $\partial U_\beta$ is a local manifold around the point $p$, and there exists an $n$-dimensional ball $B'\subset B$ that is maximal in $U_\beta$, and whose boundary sphere $\partial B'$ is tangent to $\partial B$ at $p$. Now Case~1 implies that $S_C\subset B'$, and since $B'$ is contained in $B$ and $S_C$ cannot have common points with the interior of $B$, we conclude that $B=B'$ and $S_C\subset B$.
\end{proof}

Let $\Lambda\subset S$ be a finite set, such that for every cell $C\in\mathcal C_S$, the set $\Lambda\cap C$ is non-empty, and for every positive-dimensional cell $C\in\mathcal C_S$, the affine span of $\Lambda\cap C$ has dimension $\dim(C)+1$.
We notice that the existence of such a set $\Lambda$ follows from the fact that the family $\mathcal C_S$ is finite.

Let us formulate the statement that is a straightforward consequence of the assumption on $\Lambda$. 
\begin{proposition}\label{CLambda_contain_prop}
For every cell $C\in\mathcal C_S$, an $(n-1)$-dimensional sphere contains the set $\Lambda\cap C$, if and only if this sphere contains the whole cell $C$.
\end{proposition}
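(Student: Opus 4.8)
The plan is to prove the nontrivial implication of the ``if and only if''. The implication that an $(n-1)$-dimensional sphere $\Sigma$ containing the cell $C$ also contains $\Lambda\cap C$ is immediate from the inclusion $\Lambda\cap C\subseteq C$ built into Definition~\ref{Lambda_def}. For the converse I would prove the stronger statement that if $\Sigma\supseteq\Lambda\cap C$, then $\Sigma$ contains the entire supporting sphere $S_C$; since $C\subseteq S_C$, this yields $\Sigma\supseteq C$ at once. Thus the whole problem is to recover the full sphere $S_C$ from the finitely many sample points $\Lambda\cap C$ lying on it.

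To do this uniformly for $\bbE^n$, $\bbS^n$ and $\bbH^n$, I would use the standard linearization of the family of spheres. In each of the three geometries there is an embedding $\iota\colon M\to\bbR^{n+1}$ --- the paraboloid lift $x\mapsto(x,\|x\|^2)$ for $\bbE^n$, and the defining inclusion into the ambient quadric for the spherical and hyperbolic models --- with the following two properties. First, a subset $Z\subseteq M$ lies on a given $(n-1)$-dimensional sphere $\Sigma$ if and only if the image $\iota(Z)$ lies in a corresponding affine hyperplane $\hat\Sigma\subset\bbR^{n+1}$; equivalently, $\Sigma\supseteq Z$ if and only if $\hat\Sigma$ contains the affine hull of $\iota(Z)$. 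Second, the image $\iota(S_C)$ of the $m$-dimensional sphere $S_C$ spans an $(m+1)$-dimensional affine subspace $V_C\subset\bbR^{n+1}$, and $\Sigma\supseteq S_C$ holds exactly when $\hat\Sigma\supseteq V_C$.

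With this dictionary the argument becomes linear algebra. Because $\Lambda\cap C\subseteq S_C$, the affine hull of $\iota(\Lambda\cap C)$ is automatically contained in $V_C$; the crux is that it is in fact all of $V_C$. This is precisely the role of the affine-dimension requirement in Definition~\ref{Lambda_def}: it is chosen so that the sample set $\Lambda\cap C$ affinely spans the affine hull of $S_C$, and hence, after applying $\iota$, its image spans $V_C$. Granting the equality $\mathrm{affhull}(\iota(\Lambda\cap C))=V_C$, any affine hyperplane $\hat\Sigma$ containing $\iota(\Lambda\cap C)$ must contain $V_C$, so $\Sigma\supseteq S_C\supseteq C$, as required.

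The step I expect to be the main obstacle is the verification that $\iota(\Lambda\cap C)$ really spans the full $(m+1)$-dimensional subspace $V_C$. The delicate point is that $\iota$ is not an affine map on all of $M$: on the sphere $S_C$ the quadratic defining relation collapses to an affine one, so $\iota$ restricted to $S_C$ is the restriction of an injective affine map, and one has to track carefully how a spanning subset of the affine hull of $S_C$ is transported by this map to a spanning subset of $V_C$ in each of the three models. Once this bookkeeping --- together with the elementary fact that $S_C$ is recovered as $\iota^{-1}(V_C\cap\iota(M))$ and is therefore pinned down by its affine hull among all hyperplane sections of $\iota(M)$ --- is carried out, the proposition follows formally.
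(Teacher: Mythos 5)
Your linearization framework is the right one, and it is worth noting that the paper itself offers no argument here: Proposition~\ref{CLambda_contain_prop} is asserted as a ``straightforward consequence'' of Definition~\ref{Lambda_def}, so your lift-to-$\bbR^{n+1}$ outline is a fair reconstruction of what the author must have in mind. However, the step you explicitly defer --- ``granting the equality $\mathrm{affhull}(\iota(\Lambda\cap C))=V_C$'' --- is not bookkeeping; it is the entire content of the proposition, and under the literal wording of Definition~\ref{Lambda_def} it is \emph{false}. You yourself observe that $\iota$ restricted to $S_C$ is the restriction of an injective affine map; such a map preserves affine dimension, so the definition yields $\dim\mathrm{affhull}\bigl(\iota(\Lambda\cap C)\bigr)=\dim\mathrm{affhull}(\Lambda\cap C)=\dim(C)=m$, whereas $V_C$ has dimension $m+1$ (an $m$-dimensional sphere affinely spans an $(m+1)$-dimensional subspace). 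An $m$-dimensional affine subspace of $\bbR^{n+1}$ lies in many hyperplanes that do not contain the ambient $(m+1)$-dimensional $V_C$, so your final implication does not close. Concretely: take $M=\bbE^2$, let $C$ be a one-dimensional cell (a circular arc), and let $\Lambda\cap C$ consist of two distinct points. The affine span has dimension $1=\dim(C)$, so Definition~\ref{Lambda_def} is satisfied, yet infinitely many circles pass through two given points and all but one fail to contain~$C$.

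The repair is to use the spanning condition your argument actually needs: $\mathrm{affhull}(\Lambda\cap C)=\mathrm{affhull}(S_C)$, i.e.\ affine span of dimension $\dim(C)+1$, equivalently that $\Lambda\cap C$ contains at least $\dim(C)+2$ points in spherically general position, so that $S_C$ is the unique smallest sphere through $\Lambda\cap C$. This is evidently the author's intent --- with the literal reading of Definition~\ref{Lambda_def} the proposition itself fails, as the same two-point example shows, and the statement is salvaged only if ``affine span of dimension $\dim(C)$'' is read in the M\"obius sense in which $k+2$ generic points span a $k$-dimensional sphere. Your proposal papers over this discrepancy by asserting that the definition ``is chosen so that the sample set affinely spans the affine hull of $S_C$,'' which is not what it says; you flagged the verification as the main obstacle and then did not carry it out. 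Once the corrected spanning hypothesis is stated, the rest of your argument --- the dictionary between spheres and hyperplane sections in all three geometries, the recovery $S_C=\iota^{-1}\bigl(V_C\cap\iota(M)\bigr)$, and the conclusion $\Sigma\supseteq S_C\supseteq C$ --- is correct and goes through verbatim.
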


The following proposition immediately implies part (i) of Lemma~\ref{cel_structure_lemma}.

\begin{proposition}
The central set $C_U$ is a subcomplex of the $(n-1)$-skeleton of the Voronoi cell decomposition of the set $\Lambda$.
\end{proposition}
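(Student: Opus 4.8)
The plan is to establish two things about $C_U$ and the Voronoi cell decomposition of $\Lambda$: first, that every point of $C_U$ lies on a Voronoi face of dimension at most $n-1$; and second, that the relative interior of each Voronoi face either is contained in $C_U$ or misses $C_U$ entirely. Since $C_U$ is compact (Lemma~\ref{compact_U_Xf_lemma}) and in $\bbE^n$, $\bbS^n$ and $\bbH^n$ the bisector of any two points is a totally geodesic hypersurface — so the Voronoi regions are geodesically convex polytopes and their faces are convex polytopes — these two facts together exhibit $C_U$ as a subcomplex of the $(n-1)$-skeleton and, as a byproduct, give the convex-polytope description asserted in part~(i) of Lemma~\ref{cel_structure_lemma}.

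The membership statement I would prove as follows. Fix $q\in C_U$ and let $B=B_q(\bfr_U(q))$ be the associated maximal ball, of radius $r=\bfr_U(q)>0$. Because $\Lambda\subset S=\partial U$ while the open ball of radius $r$ about $q$ is an open subset of $U$ and hence lies in the interior of $U$, every $\lambda\in\Lambda$ satisfies $d(q,\lambda)\ge r$, with equality exactly when $\lambda\in\partial B$. Thus $r=\min_{\lambda\in\Lambda}d(q,\lambda)$ and $P_q:=\partial B\cap\Lambda$ is precisely the set of points of $\Lambda$ nearest to $q$. Each contact point of $\partial B$ with $S$ is an interior point of a cell $C\in\mathcal C_S$ by Proposition~\ref{interior_point_prop3}, so $S_C\subset\partial B$ by Proposition~\ref{cell_contain_prop}, whence $\Lambda\cap C\subset P_q$; in particular $P_q\neq\emptyset$, since a maximal ball of positive radius must touch $S$ and every cell carries a point of $\Lambda$. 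A maximality argument now shows $|P_q|\ge2$: if all contact points lay in one open geodesic half-space through $q$, one could move $q$ and enlarge $r$ while staying inside $U$, contradicting maximality; since the contact set is therefore not contained in a hemisphere of $\partial B$, and every touched positive-dimensional cell already carries at least two points of $\Lambda$ while every touched $0$-cell lies in $\Lambda$, at least two distinct nearest points occur. Hence $q$ is equidistant from at least two nearest points of $\Lambda$ and lies on a Voronoi face of dimension $\le n-1$.

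For the subcomplex property, fix a relatively open Voronoi face $\tau$ and let $P\subset\Lambda$ be the set of nearest points of $\Lambda$ common to all $x\in\tau$. The crucial rigidity comes from Proposition~\ref{CLambda_contain_prop}: for each $x\in\tau$ the $(n-1)$-sphere $\partial B_x(d(x,P))$ passes through $P$, hence through $\Lambda\cap C$, and therefore through all of $C$ and of $S_C$, for every cell $C$ with $\Lambda\cap C\subset P$. Consequently the union $K_P$ of these cells lies on $\partial B_x(d(x,P))$ for \emph{every} $x\in\tau$, so the combinatorial contact pattern does not vary along $\tau$. Using the closedness of $C_U$, the intersection $C_U\cap\tau$ is closed in $\tau$; I would complete the argument by showing it is also open in $\tau$, so that by connectedness of $\tau$ it is all of $\tau$ or empty, which is exactly the subcomplex property.

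The openness is where I expect the real work to lie. Concretely, one must show that if $B_q(d(q,P))$ is inscribed in $U$ (tangent to $S$ from inside along $K_P$) for one $q\in\tau$, then $B_x(d(x,P))$ remains inscribed for all nearby $x\in\tau$, i.e. that the moving sphere does not begin to protrude through $S$ between the fixed contact cells. Away from $K_P$ this is a soft continuity-and-compactness estimate, since at $q$ the boundary is uniformly farther than $r$ off a neighborhood of $K_P$; the delicate point is local, near an interior point of a contact cell $C$, where $S$ is assembled (Proposition~\ref{interior_point_prop2}) from spheres $\partial B_i$ each containing $S_C\subset\partial B_x(d(x,P))$, and one must verify that the inscribed ball stays on the interior side, i.e. that the tangency of $\partial B_x(d(x,P))$ to each such $\partial B_i$ along $S_C$ persists as $x$ ranges over the equidistant flat of $P$. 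I expect this persistence-of-tangency check, carried out cell by cell using the explicit sphere geometry of the three space forms, to be the technical heart of the proof, with the remaining steps amounting to bookkeeping within the Voronoi complex.
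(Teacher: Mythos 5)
Your first half---that every point of $C_U$ lies in the $(n-1)$-skeleton---is correct and essentially identical to the paper's argument: both reduce to showing that the boundary sphere of a maximal ball carries at least two points of $\Lambda$, using Proposition~\ref{interior_point_prop3}, Proposition~\ref{cell_contain_prop} and Definition~\ref{Lambda_def} (your half-space maximality argument just makes explicit the standard fact, asserted in the paper, that a maximal ball touches $S$ in at least two points). The genuine gap is in the second half. You correctly reduce the subcomplex property, via closedness of $C_U$ and connectedness of a face $\tau$, to showing that $C_U\cap\tau$ is open in $\tau$---and then you leave exactly that step unproved. The ``persistence-of-tangency check'' you flag as the expected technical heart \emph{is} the entire content of the proposition at this point, and the route you sketch for it (a cell-by-cell analytic verification, using explicit sphere geometry in the three space forms, that the inscribed sphere stays on the interior side of each $\partial B_i$ along $S_C$ as $x$ moves in the equidistant flat) is both absent and harder than necessary: the tangency along a contact cell is non-transverse, so no soft continuity argument gives it, and nothing in your sketch controls how the sphere behaves near the boundary of a contact cell, where new cells of $\mathcal C_S$ can come into play.

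The paper closes precisely this step combinatorially rather than analytically, using pieces you already have in hand. Given $x$ in the interior of a Voronoi cell $V$ with $x\in C_U$ and $y\in V$ with $y\notin C_U$, join them by a geodesic segment $l\subset V$; by convexity every point of $l$ except possibly $y$ is interior to $V$. By continuity of $p\mapsto S_p$ there is a first transition point $q\in l$, $q\neq y$, at which $S_q$ still bounds a maximal ball in $U$, but $S_q$ must touch $S$ at a point $w$ lying in no cell of $\mathcal A_V$ (the limit of boundary points that enter the spheres $S_p$ just beyond $q$). By Proposition~\ref{interior_point_prop3}, $w$ is an interior point of some cell $C$, and Proposition~\ref{cell_contain_prop} (or triviality, if $C$ is a singleton) forces the whole cell $C\not\in\mathcal A_V$ onto $S_q$; then Definition~\ref{Lambda_def} and Proposition~\ref{CLambda_contain_prop} plant points of $\Lambda\setminus\Lambda_V$ on $S_q$, contradicting the fact that $q$ is interior to $V$, so that $S_q$ meets $\Lambda$ exactly in $\Lambda_V$. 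In other words, the rigidity you yourself observed---any new touching forces an entire cell, hence new $\Lambda$-points, onto the sphere---is not a preliminary to the tangency-persistence argument; it \emph{is} the argument, and it eliminates the space-form computations entirely. Substituting this contradiction at your openness step would complete your proof along essentially the paper's lines.
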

\begin{proof}
First, let us prove that the central set $C_U$ is contained in the $(n-1)$-skeleton of the Voronoi cell decomposition of the set $\Lambda$. In order to do this, it is enough to show that the boundary sphere of every maximal ball in $U$ contains at least two distinct points from $\Lambda$. This can be shown in the following way: the boundary sphere of every maximal ball in $U$ contains at least two distinct points from $S$. If both of these points form $0$-dimensional cells from $\mathcal C_S^0$, then by the assumption on $\Lambda$, these two points belong to $\Lambda$. If at least one of these two points does not belong to a $0$-dimensional cell from $\mathcal C_S$, then, by Proposition~\ref{interior_point_prop3}, it is an interior point of some positive dimensional cell $C\in\mathcal C_S$. In this case, Proposition~\ref{cell_contain_prop} implies that the whole cell $C$ is contained in the boundary sphere of the maximal ball. In particular, according to the assumption on $\Lambda$, this means that at least two distinct points from $\Lambda$ belong to this boundary sphere.

Now we claim that if an interior point of an $m$-dimensional Voronoi cell belongs to $C_U$, then the whole cell is contained in $C_U$ as well. Indeed, every $m$-dimensional Voronoi cell $V$ corresponds to some subset $\Lambda_V\subset\Lambda$ with the property that for every point $p\in V$ there exists an $(n-1)$-dimensional sphere $S_p$ that is centered at $p$ and contains the set $\Lambda_V$, and if $p$ is an interior point of $V$, then the corresponding sphere $S_p$ does not contain any other points from $\Lambda$.
Then Proposition~\ref{CLambda_contain_prop} implies that there exists a collection of cells $\mathcal A_V\subset\mathcal C_S$, such that for every interior point $p\in V$, the sphere $S_p$ contains precisely the cells from $\mathcal A_V$ and no other cells from $\mathcal C_S$.

Assume that there exists an interior point $x$ of the cell $V$, such that $x\in C_U$, and a point $y\in V$, such that $y\not\in C_U$. This is equivalent to the statement that $S_x$ is the boundary sphere of a maximal ball in $U$, but $S_y$ is not. In other words, there are no points of $S$ in the interior of the sphere $S_x$, and there are such points in the interior of the sphere $S_y$. 
We connect the points $x$ and $y$ by a geodesic segment $l\subset V$. Since $V$ is convex, all points of $l$, except possibly the point $y$, lie in the interior of $V$. Since the sphere $S_p$ depends continuously (in Hausdorff topology) on the point $p\in V$, there exists an intermediate point $q\in l$, $q\neq y$, with the property that for all $p\in l$ lying between $x$ and $q$, there are no points of $S$ in the interior of the sphere $S_p$, and for all $p\in l$ sufficiently close to $q$ but lying between $y$ and $q$, such points of $S$ exist. Then continuous dependence of $S_p$ on $p\in V$ implies that the sphere $S_q$ is the boundary sphere of a maximal ball in $U$, and $S_q$ must contain a point $w\in S$ that does not belong to any of the cells from $\mathcal A_V$. According to Proposition~\ref{interior_point_prop3}, there exists a cell $C\in\mathcal C_S$, such that $w\in C$ and $w$ is either an interior point of $C$, or $\dim(C)=0$. In both cases either Proposition~\ref{cell_contain_prop} or the fact that $C$ is a singleton, imply that the cell $C\not\in\mathcal A_V$ is a subset of $S_q$. The latter contradicts to the fact that $q$ is an interior point of $V$. 
\end{proof}

In order to prove part~(ii) of Lemma~\ref{cel_structure_lemma}, we notice that the $(n-1)$-dimensional cells of the family $\mathcal C_S$ cover the whole boundary $S$ of the ball-polytope $U$. This implies that the maximal balls centered at all $0$-dimensional cells of $C_U$, cover the whole set $U$. 

Finally, it follows from part~(i) of Lemma~\ref{cel_structure_lemma} that the central set $C_U$ is compact, hence by Lemma~\ref{retract_lemma2}, the function 
$\accentset{\circ}F_U\colon\intU\times[0,1]\to \intU$  is a continuous deformation retraction of $\intU$ onto $C_U$, which proves part~(iii) of Lemma~\ref{cel_structure_lemma}.
\end{proof}

\bibliography{cut_locus_biblio}
\bibliographystyle{plain}

\end{document}